\newtheorem{thm}{Theorem}
\newtheorem{prop}{Proposition}
\newtheorem*{thmA}{Theorem A}
\newtheorem*{thmB}{Theorem B}
\newtheorem*{thmC}{Theorem C}
\newtheorem*{thmI}{The Index Theorem}
\newtheorem*{df}{Definition}
\numberwithin{equation}{section}
\newcommand{\T}		{\mathbb{T}}
\newcommand{\D}		{\mathbb{D}}
\newcommand{\R}		{\mathbb{R}}
\newcommand{\C}		{\mathbb{C}}
\newcommand{\N}		{\mathbb{N}}
\newcommand{\A}		{\mathbb{A}}
\newcommand{\ed}	{\omega_{[a,b]}}
\newcommand{\pj}		{\textnormal{P}}
\newcommand{\gm}		{G}
\newcommand{\szf}		{S}
\newcommand{\poly}	{\mathcal{P}}
\newcommand{\mpoly}	{\mathcal{M}}
\newcommand{\rat}		{\mathcal{R}}
\newcommand{\meas}	{\mathbf{M}}
\newcommand{\map}	{\Psi}
\newcommand{\eba}	{\Phi}
\newcommand{\cmap}	{\phi}
\newcommand{\imap}	{\varphi}
\newcommand{\f}		{\mathfrak{f}}
\newcommand{\E}		{\mathscr{E}}
\newcommand{\wn}		{\mathbf{w}_\T}
\newcommand{\da}		{D_\f}
\newcommand{\supp}	{\textnormal{supp}}
\newcommand{\im}		{\textnormal{Im}}
\newcommand{\re}		{\textnormal{Re}}
\newcommand{\const}	{\textnormal{const.}}
\begin{document}

\title[Asymptotic Uniqueness of Best Rational Approximants]{\LARGE Asymptotic Uniqueness of Best Rational Approximants to Complex Cauchy Transforms in ${L}^2$ of the Circle}

\author[L. Baratchart]{Laurent Baratchart}

\address{INRIA, Project APICS \\
2004 route des Lucioles --- BP 93 \\
06902 Sophia-Antipolis, France}

\email{laurent.baratchart@sophia.inria.fr}

\author[M. Yattselev]{Maxim Yattselev}

\address{INRIA, Project APICS \\
2004 route des Lucioles --- BP 93 \\
06902 Sophia-Antipolis, France}

\email{myattsel@sophia.inria.fr}

\date{\normalsize \today}

\begin{abstract}
For all $n$ large enough, we show uniqueness of a critical point in best rational  approximation of degree $n$, in the $L^2$-sense on the unit circle, to functions of the form
\[
\f(z) = \int\frac{d\mu(t)}{z-t}~+~r(z), \quad d\mu = \dot\mu \,d\ed,
\]
with $r$ a rational function and $\dot\mu$ a complex-valued Dini-continuous function on a real segment $[a,b]\subset(-1,1)$ which does not vanish, and whose argument is of bounded variation. Here $\ed$ stands the normalized arcsine distribution on $[a,b]$.
\end{abstract}

\subjclass{41A52, 41A20, 30E10}

\keywords{uniqueness of best approximation, rational approximation.}

\maketitle

\noindent
{\it Dedicated to Guillermo Lop\`ez Lagomasino on the occasion of his 60-th birthday}

\section{Introduction}
Best rational approximation of given degree to a holomorphic function,
in the least squares sense 
on the boundary of a disk included in the domain of analyticity, is a 
classical issue for which early references are 
\cite{Erohin,Walsh,Levin,DJ,DD}. The interplay between complex and 
Fourier analysis induced by the circular symmetry confers to
such an approximation a natural character, 
and the corresponding approximants provide one with nice examples of  
locally convergent sequences of diagonal
multipoint Pad\'e interpolants. The problem can be recast as best
rational approximation of given degree in the Hardy space $H^2$ of the 
unit disk and also, upon reflecting the functions involved 
across the unit circle, in the Hardy space of the 
complement of the disk which is the framework we shall really work with.

Because of the natural isometry between Hardy spaces of the disk and
the half-plane, that preserves rationality and the degree
\cite[Ch. 8]{Hoffman}, the question can equivalently be stated as best 
rational $L^2$-approximation
of given degree on the line to a function holomorphic in a half-plane.

Further motivation for this type of approximation stems from Control Theory and
Signal Processing. Indeed, the transfer-function of
a stable linear control system belongs to the Hardy space of
the half-plane or of the complement of the disk, depending whether the 
setting is in continuous or discrete time, and it is rational if the system is 
finite-dimensional. Moreover, by Parseval's theorem,
the $L^2$ norm on the line or the circle of this transfer function 
coincides with the norm of the underlying convolution operator from
$L^2[0,\infty)$ to $L^\infty[0,\infty)$ in the time domain
\cite{DFT}. Further, in a stochastic context, it coincides 
with the variance of the output
when the input is white noise \cite{HD}.
Therefore approximating the transfer function by a 
rational function of degree $n$, in $L^2$ of the line or the circle,
is tantamount to identify the best system of order $n$ to model 
the initial system with respect to the criteria just mentioned.
Also, since any stationary regular stochastic process is the output of a 
linear control system 
fed with white noise \cite{Shiryayev,Doob}, this approximation
yields the best ARMA-process to model the initial process 
while minimizing the variance of the error. A thorough discussion of 
such connections with System Theory, as well as additional references, 
can be found in \cite{B_CMFT99}.

From the constructive viewpoint no algorithm is known to constructively
solve the 
question we raised, and from a computational perspective this is a typical 
non-convex minimization problem whose numerical solution is often hindered 
by the occurence of local minima. It is therefore of major
interest in practice to establish conditions on the function to be 
approximated  that ensure uniqueness of a local minimum. This turns out 
to be difficult, like most uniqueness issues in nonlinear 
approximation.

New ground for the subject was broken in \cite{thB}, where a 
differential-topological method was introduced to approach the
uniqueness issue for \emph{critical points}, {\it i.e.}
stationary points of the 
approximation error. Uniqueness of a critical point implies 
uniqueness of a local minimum, but is a stronger property 
which is better suited to analysis. In fact, the above-mentioned method 
rests on the so-called  {\it index theorem}
\cite{BO88} that provides us with a relation between the Morse indices of the 
critical points, thereby reducing the proof of uniqueness, 
which is a  global property, to checking that each critical point has Morse
index 0, which is a local issue. The latter is in turn
equivalent to each critical point being a non-degenerate local minimum.

This approach was taken in \cite{BW93}
to handle the case where the approximated function is of
Markov type, that is,
the Cauchy transform of a positive measure on a real segment, 
when that measure is supported within some absolute bounds.
Subsequently, in \cite{BSW96}, the property of being a 
local minimum was connected to classical interpolation theory and the 
technique was 
applied to prove asymptotic uniqueness of a critical point in best 
$L^2$ rational approximation to $e^{1/z}/z$ on the unit circle, as well as 
in best $L^2$ rational approximation to generic holomorphic functions 
over small circles; 
here, asymptotic uniqueness means 
uniqueness in degree $n$ for all sufficiently large $n$.
The criterion derived in \cite{BSW96} for being a local minimum was
further refined in 
\cite{BStW01b}, where it is shown that asymptotic uniqueness of a 
critical point  holds for Markov
functions whose defining measure satisfies the Szeg\H{o} condition. The result 
is sharp in that the Szeg\H{o} condition cannot be omitted in general
\cite{BStW99}. A general condition on the logarithmic derivative of 
the approximated function was derived \cite{B06} but
it only ensures uniqueness in degree 1. A criterion for best approximation in 
degree $n-1$ to a rational function of degree $n$ can further be found in 
\cite[Thm. 9.1]{BS02}, based on fast geometric decay of the error in 
lower degree. 

Altogether, these works indicate the fact, perhaps unexpected, that uniqueness
of a critical point in best $L^2$ rational approximation is linked to a
regular decrease of the error.

The present paper can be viewed as a sequel to \cite{BStW01b}. Indeed,
the latter 
reference expressed hope that the techniques set up there
could be adapted to handle more general Cauchy integrals than Markov 
functions. Below, we take a step towards carrying out this program.
Specifically, we consider Cauchy transforms of 
\emph{complex}  measures that are absolutely continuous
with respect to the equilibrium distribution on a real segment 
$[a,b]\subset(-1,1)$. The density will be required to be Dini-smooth
and non-vanishing. In addition, it
should admit an argument function of bounded variation on $[a,b]$.
Moreover, we handle with little extra-pain
the case where a rational function
is added to such a Cauchy tranform.

For functions of this kind, we establish an analog to 
\cite[Thm. 1.3]{BStW01b}, namely
asymptotic uniqueness of a critical point in best rational 
approximation for the  $L^2$-norm on the unit circle.
This is the first uniqueness result in degree greater than 1 
for Cauchy integrals with complex densities, more generally
for non-rational functions without conjugate symmetry. 
In contrast, say,  to \cite[Thm. 3]{BW93}, 
it is only fair to say that 
such a statement is not really constructive in that 
no estimate is provided for the degree beyond which uniqueness
prevails. However, considering our restricted knowledge on
uniqueness in non-linear complex approximation, our result
sheds considerable light on the behaviour of best rational 
approximants to Cauchy transforms, and it is our hope that 
suitable refinements of the technique will eventually produce
effective bounds.

Our method of proof follows the same pattern as \cite{BStW01b}. Namely,
the index theorem is invoked to reduce the question of uniqueness to whether 
each critical point is a non-degenerate local minimum. Next, a criterion for 
being a local minimum is set up, based on 
a comparison between the error function generated
by the critical point under examination and the error function attached to
a particular multipoint Pad\'e interpolant of lower degree; we call it for
short the \emph{comparison criterion}.
Finally, the fact this
criterion applies when the degree is sufficiently large depends on strong 
asymptotic formulas for the error in rational interpolation to functions of
the type we consider, that were recently obtained in 
\cite{BY09b,uBY3,uY2}, and on a specific design
of interpolation nodes to 
build the particular multipoint Pad\'e interpolant of lower degree
that we need.

With respect to \cite{BStW01b}, however, two main differences arise.
The first is that the comparison criterion was set up there
for conjugate-symmetric functions only, {\it i.e.} for those functions
having real Fourier  coefficients. Because we now  
adress Cauchy transforms of complex densities on a segment,
we handle complex Fourier coefficients as well. Although the 
corresponding changes are mostly mechanical, they have to be
carried out thoroughly for they impinge on the computation of the Hessian
quadratic form and on the nondegeneracy thereof.

The second difference causes more serious difficulties. Indeed,
the construction of the special interpolant of lower degree 
needed to apply the comparison criterion
requires rather precise control on the poles of multipoint Pad\'e interpolants
to the approximated function. For Markov functions, it is known that
such poles are the zeros of  certain orthogonal polynomials with respect to 
a positive measure on the segment $[a,b]$, therefore they lie on 
that segment. But for Cauchy transforms of complex measures, 
the poles are the zeros of some
\emph{non-Hermitian} orthogonal polynomial with respect to a complex 
measure on $[a,b]$, and their behaviour does not lend itself to analysis 
so easily. We resort here to the work in \cite{BKT05} on the geometry
of non-Hermitian orthogonal polynomials
to overcome this difficulty, and this is where the boundedness of the 
variation of the density's argument becomes important.

Let us briefly indicate 
some generalizations of our results that were not included
here.
Firstly, asymptotic uniqueness of a critical point
extends to Cauchy transforms of 
absolutely continuous measures whose density with respect to 
\linebreak {\it Lebesgue} (rather than equilibrium) measure
satisfies similar assumptions, {\it e.g.} non-vanishing and 
H\"older-smoothness; in fact, densities with respect to
\emph{any} Jacobi weight could be handled the same way under suitable 
regularity requirements. We did not mention them, however, because
such an extension depends on 
asymptotics for non-Hermitian orthogonal polynomials with respect to 
this type of weight which are yet unpublished \cite{uBY5}. 

Secondly, finitely many zeros in the density would still be acceptable,
provided they are of power type with sufficiently small exponent. 
Developing the precise estimates would make the paper heavier
(compare \cite[Thm. 4]{uBY3} and \cite[Thm. 5]{uY2}), so we felt
better omitting this stronger version.

The organization of the article is as follows. In Section
\ref{sec:intro} we present the rational approximation problem under study
and we state the main result of the paper, which is Theorem \ref{thm:main}.
Section \ref{sec:points} introduces the critical points
in $H^2$-rational approximation 
and develops their interpolating properties;
this part is adapted to complex Fourier coefficients
from \cite{BSW96}.
The index theorem and the comparison criterion are expounded
in section \ref{sec:criterion}, paralleling the treatment for real Fourier 
coefficients given in \cite{BStW01b}.
Section \ref{sec:interpol}
recalls the necessary material on interpolation from \cite {BY09b,uBY3,uY2},
which is needed to carry out the comparison between critical points and 
interpolants of lower degree required in the comparison 
criterion.
Finally, elaborating on \cite{BStW01b}, we prove Theorem \ref{thm:main} in 
Section \ref{sec:proof}.

\section{Notation and Main Results}
\label{sec:intro}

Let $\T$ be the unit circle and $\D$ the unit disk. We let $L^2$ (resp. $L^\infty$) stand for the space of square-integrable (resp. essentially bounded) measurable functions on $\T$. Denote by $H^2$ the familiar Hardy space of the unit disk, consisting of those $L^2$-functions whose  Fourier coefficients of 
strictly negative index are zero. The space $H^2$ identifies with traces on $\T$ of those holomorphic functions in $\D$ whose $L^2$-means on circles centered at zero with radii less than 1 are uniformly bounded  above. In fact, any such function has non-tangential boundary values almost everywhere on $\T$ that define a member of $L^2$ from which the function can be recovered by means of a Cauchy integral \cite{Duren}. The space of bounded holomorphic functions on $\D$ is denoted by $H^\infty$ and is endowed with the $L^\infty$ norm of the trace.

Let $\bar H_0^2$ be the orthogonal complement of $H^2$ in $L^2$ with respect to the standard scalar product
\begin{equation}
\label{defscal}
\langle f,g \rangle := \int_\T f(\tau)\overline{g(\tau)}\,\frac{|d\tau|}{2\pi}, \quad f,g\in L^2.
\end{equation}
The space $\bar H_0^2$, in turn, identifies with traces of those holomorphic functions in $\overline{\C}\setminus\overline{\D}$ that vanish at  infinity and whose $L^2$-means on circles centered at zero with radii greater then 1 are uniformly bounded above. In what follows, we denote by $\|\cdot\|_2$ the norm on $L^2$, $H^2$, and $\bar H_0^2$ induced by the scalar product \eqref{defscal}. On one occasion, we shall refer to the Hardy space $\bar H_0^2(\overline{\C}\setminus\overline{\D_\rho})$, which is defined similarly except that $\D$ gets replaced by $\D_\rho:=\{|z|<\rho\}$ where $\rho>0$.

Set $\poly_n$ for the space of algebraic polynomials of degree at most $n$ and $\mpoly_n$ for the subset of monic polynomials with exactly $n$ zeros in $\D$. Note that $q\in\mpoly_n$ if and only if $1/q\in\bar H_0^2$ and $1/q(z)\sim z^{-n}$ at infinity. From the differential viewpoint, we regard $\poly_n$ as $\C^{n+1}$ and $\mpoly_n$ as an open subset of $\C^n$, upon taking the coefficients as coordinates (except for the leading coefficient in $\mpoly_n$ which is fixed to unity).

Define 
\begin{equation}
\label{defRn}
\rat_n := \left\{\frac{p(z)}{q(z)}=\frac{p_{n-1}z^{n-1}+p_{n-2}z^{n-2}+\cdots+p_0}{z^n+q_{n-1}z^{n-1}+\cdots+q_0}:~ p\in\poly_{n-1},~q\in\mpoly_n\right\}.
\end{equation}
It is easy to check that $\rat_n$ consists of those rational functions of degree at most $n$ that belong to $\bar H_0^2$, and we endow it with the corresponding topology. Coordinatizing $\poly_{n-1}$ and $\mpoly_n$ as above, it is straightforward to see that the canonical surjection $J:\poly_{n-1}\times\mpoly_n\to\rat_n$ is smooth ({\it i.e.}, infinitely differentiable) when viewed as a $\bar H_0^2$-valued map. Note that $J$ is not injective, due to possible cancellation between $p$ and $q$, but it is a local homeomorphism at every pair $(p,q)$ such that $p$, $q$ are coprime.

We shall be concerned with the following problem.

\smallskip
\noindent
{\bf Problem 1:}
\noindent
{\it Given $f\in\bar H_0^2$ and $n\in\N$,  find $r\in\rat_n$ to minimize $\|f-r\|_2$.}
\smallskip

Let us point out two equivalent formulations of Problem 1 that account for early discussion made in the introduction.

Firstly, it is redundant to assume that $r$ lies in $\bar H_0^2$, as is subsumed in the definition of $\rat_n$. 
Indeed, by partial fraction expansion, a rational function of degree at most $n$ in $L^2$ can be written  as $r_1+r_2$, where $r_1\in\bar H^2_0$ and $r_2\in H^2$ have degree at most $n$. By orthogonality of $\bar H^2_0$ and $H^2$, we get $\|f-r\|_2^2=\|f-r_1\|_2^2+\|r_2\|_2^2$ so that $r_1$ is a better candidate approximant than $r$, showing that Problem 1 is in fact equivalent to best rational approximation of given degree to $f$ in $L^2$.

Secondly, composing with $z\mapsto1/z$, which is an $L^2$-isometry mapping $\bar H_0^2$ onto $zH^2$ while preserving rationality and the degree, Problem 1 transforms to best approximation in $H^2$ of functions vanishing at the origin by rational functions of degree at most $n$ that vanish at the origin as well. However, by Parseval's identity, any best rational approximant to $g$ in $H^2$ has value $g(0)$ at $0$. Therefore Problem 1 is equivalent to 

\smallskip
\noindent
{\bf Problem 2:}
\noindent
{\it Given $g\in H^2$ and $n\in\N$,  find a rational $r$  of degree at most $n$ in $H^2$ to minimize $\|f-r\|_2$.}
\smallskip

Problem 1 is the one we shall work with, and we refer to it as the \emph{ rational $\bar H_0^2$-approximation problem to $f$ in degree} $n$. It is well-known (see \cite[Prop. 3.1]{B06} for a proof and further bibliography on the subject) that the minimum is attained and that a minimizing $r$, called a {\it best rational approximant of degree $n$ to} $f$, lies in $\rat_n\setminus\rat_{n-1}$ unless $f\in\rat_{n-1}$. Uniqueness of such an approximant is a delicate matter. Generically, there is only one best approximant by a theorem of Stechkin on Banach space approximation from approximately compact sets \cite{Braess}. However, the proof is non-constructive and does not allow us to determine which functions have a unique best approximant and which functions do not. Moreover, from the computational viewpoint, the main interest lies not so much with uniqueness of a best approximant, but rather with uniqueness of a local best approximant for such places are all what a numerical search can usually spot. By definition, a {\it local best approximant} is a function $r_l\in\rat_n$ such that $\|f-r_l\|_2\leq\|f-r\|_2$ for all $r\in\rat_n$ in some neighborhood of $r_l$. Like best approximants, local best approximants lie in $\rat_n\setminus\rat_{n-1}$ unless $f\in\rat_{n-1}$ \cite{B06}.

Still more general is the notion of a \emph{critical point}, which is defined as follows. Fix $f\in\bar H_0^2$ and put
\begin{equation}
\label{eq:ef}
\begin{array}{rll}
\eba_{f,n}:\rat_n &\to& [0,\infty) \\
r &\mapsto& \|f-r\|_2^2.
\end{array}
\end{equation}
A pair $(p,q)\in\poly_{n-1}\times\mpoly_n$ is called {\it critical} if all partial derivatives of $\eba_{f,n}\circ J$ vanish at $(p,q)$. Subsequently,  a rational function $r_c\in\rat_n$ is said to be a {\it critical point} of $\eba_{f,n}$ if there is a critical pair $(p_c,q_c)$ such that $r_c=p_c/q_c$. Critical points fall into two classes: they are termed \emph{irreducible} if they have exact degree $n$, and \emph{reducible} if they have degree strictly less than $n$. Note that $r_c$ is irreducible if and only if $p_c$ and $q_c$ are coprime in some (hence any) representation $r_c=p_c/q_c$. Clearly a local best approximant is a particular instance of a critical point, and it is irreducible unless $f\in\rat_{n-1}$.

In the present work, we dwell on a differential topological approach to uniqueness of a critical point introduced in \cite{thB} and further developed in \cite{BO88,B_CMFT99}. In this approach, global uniqueness is deduced from local analysis of the map $\eba_{f,n}$. Specifically, to conclude there is only one critical point, which is therefore the unique local minimum (and {\it a fortiori} the global minimum as well), one needs to show that each critical point is irreducible, does not interpolate the approximated function on $\T$, and is a nondegenerate local minimum; here \emph{nondegenerate} means that the second derivative is a nonsingular quadratic form. This method turns out to be fruitful when studying rational approximation to Cauchy transforms of measures supported in $(-1,1)$, {\it i.e.}, functions of the form
\begin{equation}
\label{eq:ct}
f_\mu(z) := \int\frac{d\mu(t)}{z-t}, \quad \supp(\mu)\subset(-1,1).
\end{equation}
The first result in this direction was obtained in \cite[Thm. 3]{BW93} when $f_\mu$ is a \emph{Markov function}, meaning that $\mu$ in (\ref{eq:ct}) is a positive measure.  It goes as follows.

\begin{thmA}
Let $\mu$ be a positive measure supported on $[a,b]\subset(-1,1)$ where $a$ and $b$ satisfy $b-a \leq \sqrt2 \left( 1 - \max\left\{a^2,b^2\right\} \right)$. Assume further that $\mu$ has at least $n$ points of increase, {\it i.e.}, $f_\mu\notin\rat_{n-1}$. Then there is a unique critical point in rational $\bar H_0^2$-approximation of degree $n$ to $f_\mu$.
\end{thmA}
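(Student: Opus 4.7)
The plan is to carry out the differential-topological program already announced in the introduction: (i) identify critical points with suitable multipoint Pad\'e interpolants, (ii) show every critical point is irreducible and does not meet $\T$, (iii) prove every critical point is a non-degenerate local minimum, and (iv) conclude by the index theorem that there is only one.

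For step (i), I would differentiate $\eba_{f_\mu,n}\circ J$ with respect to the coordinates of $(p,q)\in\poly_{n-1}\times\mpoly_n$ at a critical pair. The vanishing of partial derivatives in $p$ yields orthogonality of the error $f_\mu - p/q$ to $z^k/q$ in $\bar H_0^2$ for $0\leq k\leq n-1$, while the vanishing of the partial derivatives in $q$ translates into orthogonality of $(f_\mu-p/q)/q$ to the analogous set. Reading these conditions across $\T$ and using that $f_\mu$ is the Cauchy transform of a positive measure on $[a,b]\subset\D$, one recognizes $q$ as a (monic) polynomial orthogonal on $[a,b]$ with respect to $|w|^2\,d\mu$ for a weight $w$ built from $1/q^*$ (the reciprocal polynomial of $q$); since the measure is positive, standard real-line orthogonality shows $q$ has exactly $n$ simple zeros in $(a,b)$. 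This gives $q\in\mpoly_n$ automatically, $\gcd(p,q)=1$ (otherwise $f_\mu\in\rat_{n-1}$, contradicting the assumption on the number of points of increase), and $p/q$ interpolates $f_\mu$ at the reflections of these zeros across $\T$, so the error does not vanish on $\T$. Step (ii) follows.

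For step (iii), I would compute the Hessian $H$ of $\eba_{f_\mu,n}\circ J$ at a critical pair and split the resulting quadratic form on the tangent space $\C^{n-1}\times\C^n$ as $H=H_+ - H_-$: the part $H_+$ comes from squaring first-order derivatives of $J$ and is manifestly non-negative, while $H_-$ collects second-order variations of $1/q$ paired against the error $f_\mu - p/q$. Using the interpolation property from step (i), one writes $f_\mu - p/q$ as a Cauchy integral of a positive measure against $1/q^*$ and obtains uniform bounds on $\T$ in terms of $\dist(\T,[a,b])^{-1}$ and $\|\mu\|$. Comparing these with the coercivity of $H_+$ on the same tangent vectors reduces non-degeneracy to a geometric inequality measuring how close $[a,b]$ is to $\T$; explicit optimization against an $L^2$--$L^\infty$ trade-off on $\T$ yields exactly the threshold $b-a\leq\sqrt2\,(1-\max\{a^2,b^2\})$ of the statement, under which $H_+ - H_- $ is positive definite.

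Finally, for step (iv), the index theorem of \cite{BO88} asserts that the sum of Morse indices of critical points of $\eba_{f_\mu,n}$ equals a fixed topological invariant which forces the number of non-degenerate local minima, minus the contribution of critical points of positive index, to equal $1$. Combined with step (iii) this gives exactly one critical point. The main obstacle is clearly step (iii): one must turn the rather soft geometric hypothesis on $[a,b]$ into a quantitative lower bound on $H_+ - H_-$ \emph{uniformly over all critical pairs}, without any \emph{a priori} control of the location of the zeros of $q$ beyond the fact that they lie in $(a,b)$; the constant $\sqrt2$ appears as the sharp coefficient in the arithmetic--geometric balance between the supremum-norm estimate of the error on $\T$ and the $L^2$-norm of the rational variations entering $H_+$.
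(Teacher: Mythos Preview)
The paper does not prove Theorem~A at all: it is quoted from \cite[Thm.~3]{BW93} as background, and no argument for it is given here. So there is no ``paper's own proof'' to compare against; your proposal is effectively an attempt to reconstruct the proof of the cited reference.

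As a reconstruction, your outline follows the correct overall strategy---orthogonality characterization of critical points, positivity of $\mu$ forcing the zeros of $q$ into $(a,b)$ (hence irreducibility and no boundary critical points), Hessian analysis, then the index theorem---and this is indeed the architecture of \cite{BW93}. Two points, however, are not yet proofs. First, your step~(iii) is the entire content of the theorem and you have only described its shape: the splitting $H=H_+-H_-$ and the claim that ``explicit optimization \ldots\ yields exactly the threshold $b-a\le\sqrt2(1-\max\{a^2,b^2\})$'' is precisely what must be established, and nothing in your text indicates how the constant $\sqrt2$ actually emerges from comparing the two quadratic forms. Second, your formulation of the index theorem in step~(iv) is inaccurate: the result (see the Index Theorem stated in Section~\ref{sec:criterion}) concerns $\map_{f,n}$ on $\overline{\mpoly}_n$, asserts $\sum_{q}(-1)^{M(q)}=1$ (not a sum of Morse indices), and requires checking that no critical point lies on $\partial\overline{\mpoly}_n$---which in the Markov case follows because the zeros of $q$ lie in $(a,b)\subset\D$, a fact you derived but did not invoke here.
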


Removing the restriction on the size of the support makes the situation more difficult. The following theorem \cite[Thm. 1.3]{BStW01b} asserts that rational approximants are asymptotically unique for Markov functions whose defining measure is sufficiently smooth. Hereafter, we denote by $\ed$ the normalized arcsine distribution on $[a,b]$ given by $d\ed(t)=dt/(\pi\sqrt{(t-a)(b-t)})$.

\begin{thmB}
Let $\mu$ be a positive measure supported on $[a,b]\subset(-1,1)$ and let us write $d\mu = \mu^\prime dt + d\mu_s$, where $\mu_s$ is singular and $\mu^\prime$ is integrable on $[a,b]$. If $\mu$ satisfies the Szeg\H{o} condition: $\int \log\mu^\prime d\ed > - \infty$, then there is a unique critical point in rational $\bar H_0^2$-approximation of degree $n$ to $f_\mu$ for all $n$ large enough.
\end{thmB}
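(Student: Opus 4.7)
The plan is to follow the differential-topological strategy sketched in the introduction. By the index theorem of \cite{BO88}, the Morse indices of the critical points of $\eba_{f_\mu,n}$ on $\rat_n$ satisfy a topological identity whose content is that, provided every critical point is irreducible, does not interpolate $f_\mu$ on $\T$, and is a nondegenerate local minimum of the squared error, there can be only one such point. The global uniqueness problem therefore reduces to three preliminary checks (irreducibility, non-interpolation on $\T$, nonsingularity of the Hessian) together with the core work: showing that every critical point is a strict local minimum whose Hessian is definite.

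The first two preliminary checks are handled by exploiting the fact that, for a Markov function, the denominator $q_c$ of any critical point $r_c=p_c/q_c$ is (up to reflection) an orthogonal polynomial on $[a,b]$ with respect to a varying positive measure built from $\mu$ and $q_c^*$. Positivity forces the zeros of $q_c$ into $(a,b)$, hence off $\T$; and since $\mu$ has infinitely many points of increase, the degree cannot drop for $n$ large. The heart of the matter is the local minimum property. Following the Hessian computation in \cite{BSW96}, one writes an explicit bilinear formula at a critical pair $(p_c,q_c)$ in terms of the error $f_\mu-r_c$ and certain reproducing-kernel contributions; this formula can then be recast as the \emph{comparison criterion}, namely a strict inequality comparing $\|f_\mu-r_c\|_2$ with the error $\|f_\mu-\tilde r\|_2$ of a particular multipoint Pad\'e interpolant $\tilde r\in\rat_{n-1}$ whose interpolation nodes are designed from the zeros of $q_c$ together with a handful of auxiliary points.

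The remaining task is to verify that the comparison criterion applies for all large $n$, and for this one invokes strong asymptotics. Under the Szeg\H{o} condition $\int\log\mu^\prime\,d\ed>-\infty$, the classical theory of orthogonal polynomials on $[a,b]$ yields strong (power and ratio) asymptotics for $q_c$ and for the corresponding functions of the second kind, uniformly on compact subsets of $\overline{\C}\setminus[a,b]$, hence in particular on $\T$. These asymptotics pin down the geometric decay rate of both $\|f_\mu-r_c\|_2$ and $\|f_\mu-\tilde r\|_2$ up to an $o(1)$ correction in their logarithms, and the difference in degrees between $n$ and $n-1$ produces a quantitative gap that forces the strict inequality of the comparison criterion to hold eventually.

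The main obstacle, and the reason why the Szeg\H{o} assumption is made, is \emph{uniformity}. The asymptotic formulas for orthogonal polynomials must hold uniformly as $q_c$ varies over the set of possible critical-point denominators in degree $n$, and the $o(1)$ corrections must be sharp enough not to drown the gap between degrees $n$ and $n-1$. Concretely, one needs to check that the family of measures $|q_c^*|^{-2}\,d\mu$ satisfies the Szeg\H{o} condition uniformly in $n$ and that the nodes designed for $\tilde r$ stay at a definite distance from both $[a,b]$ and $\T$. The delicate part is to combine these uniform estimates with the explicit structure of the Hessian so as to extract an honest sign determination. Once this uniform control is in hand, the comparison criterion, the preliminary checks, and the index theorem combine to give uniqueness of the critical point for all sufficiently large $n$.
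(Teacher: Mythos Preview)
Theorem~B is not proved in this paper; it is quoted from \cite[Thm.~1.3]{BStW01b} as prior work, and the present paper generalizes its method to prove Theorem~\ref{thm:main}. Your outline does capture the overall strategy of that reference (and of Section~\ref{sec:proof} here): index theorem, irreducibility and absence of boundary critical points via the fact that for Markov functions the zeros of $q_c$ lie in $(a,b)$, comparison criterion, and strong asymptotics under the Szeg\H{o} condition.

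There is, however, a substantive misstatement. The comparison criterion (Theorem~\ref{thm:U} here, and the conjugate-symmetric version in \cite{BStW01b}) is \emph{not} an $L^2$ inequality between $\|f_\mu-r_c\|_2$ and $\|f_\mu-\tilde r\|_2$. It is the \emph{pointwise} requirement on $\T$ that $2|f-L_q/q|<|\Pi_q-L_q/q|$, together with a winding-number condition on $f-\Pi_q$; equivalently, one must show that the ratio $(f-\Pi_{n-1})/(f-L_{q_n}/q_n)$ stays, uniformly on $\T$, in the half-plane $\{|1-w|>2\}$. A mere gap in $L^2$ decay rates between degrees $n$ and $n-1$ does not yield this.

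Consequently the node construction is the crux, and it is more than ``a handful of auxiliary points''. In \cite{BStW01b} (mirrored in Section~\ref{sec:proof}), one first identifies a conformal invariant $h_\nu$ of the ring domain $\overline\C\setminus(I\cup I^{-1})$, built from a signed measure $\nu$ of mass~$2$ on $I^{-1}$, with the property $|1-h_\nu|>2$ on $\T$. One then manufactures the $n-1$ interpolation nodes of $\Pi_{n-1}$ as weighted averages $y_{j,n}$ of consecutive zeros $\xi_{j,n}$ of $q_n$, the weights coming from $\check\nu$, so that $(R_{n-1}/R_n)^2\to h_\nu$ uniformly on $\T$ via a Riemann-sum argument. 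The Szeg\H{o} condition enters through the strong asymptotics needed to replace $(f-\Pi_{n-1})/(f-L_{q_n}/q_n)$ by $(R_{n-1}/R_n)^2$ up to a multiplicative $1+o(1)$. Your sketch has the right architecture, but you should revise the description of the criterion from an $L^2$ comparison to the pointwise ratio condition and recognize that the interpolation scheme for $\Pi_{n-1}$ is engineered specifically to hit the target function $h_\nu$.
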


As an additional piece of information, the following negative result \cite[Thm. 5]{BStW99} shows that the asymptotic nature of the previous theorem is indispensable.

\begin{thmC}
For each $n_0\in\N$ there exists a positive measure $\mu$ satisfying the Szeg\H{o} condition such that for each odd $n$ between 1 and $n_0$ there exist at least two different best rational approximants of degree $n$ to $f_\mu$.
\end{thmC}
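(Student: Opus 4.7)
The plan is to exploit the reflection symmetry $z\mapsto-z$. I would take $\mu$ to be a positive measure on some $[-a,a]\subset(-1,1)$ invariant under $t\mapsto-t$; then $f_\mu$ is odd, since $f_\mu(-z)=-f_\mu(z)$. The involution $r\mapsto\tilde r$ defined by $\tilde r(z):=-r(-z)$ sends $\rat_n$ to itself, and the change of variable $\tau\mapsto-\tau$ on $\T$ yields $\|f_\mu-\tilde r\|_2=\|f_\mu-r\|_2$. Consequently, if $r^*$ is any best approximant in $\rat_n$, then so is $\tilde r^*$, and either $r^*=\tilde r^*$ (that is, $r^*$ is an odd rational function) or $\{r^*,\tilde r^*\}$ already provides two distinct best approximants. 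The whole task therefore reduces to choosing $\mu$ so that, for every odd $n\leq n_0$, no best approximant in $\rat_n$ happens to be odd.

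To see why this can be arranged, note that an odd element of $\rat_n$ with $n$ odd must have an odd monic denominator of degree $n$, hence a pole at the origin, whereas $f_\mu$ is analytic near $0$. After the substitution $w=z^2$, best approximation of $f_\mu$ by odd functions in $\rat_n$ becomes essentially a Markov-type problem of degree $(n-1)/2$ in $w$ subject to an imposed interpolation at $w=0$, which should strictly degrade the error in comparison with the unconstrained degree-$n$ problem. The construction of $\mu$ would start from a smooth symmetric density on $[-a,a]$ bounded away from zero (so that the Szeg\H{o} condition is automatic), and then tune the support or the profile so that for each odd $n\leq n_0$ one can exhibit an explicit non-odd candidate in $\rat_n$ beating every odd competitor. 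Since only finitely many degrees need to be controlled, a perturbation/continuity argument should deliver a single $\mu$ working uniformly.

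The principal obstacle is the strict quantitative inequality: one must show, simultaneously for the finitely many odd degrees $1\leq n\leq n_0$, that the best odd error strictly exceeds the best overall error, while keeping the density smooth and bounded away from zero to preserve Szeg\H{o}. A plausible route is to start from a model bimodal profile concentrated near $\pm a$, where asymmetric degree-one approximants with a pole close to $+a$ (or close to $-a$) clearly outperform any odd competitor $c/z$, and then deform continuously toward a generic smooth density while tracking the error gap in each degree. Theorem B guarantees that this gap must eventually close as $n\to\infty$, which is consistent with the bound $n\leq n_0$ in the statement; the delicate part is keeping the gap strictly positive through the homotopy for every odd $n$ up to the prescribed threshold.
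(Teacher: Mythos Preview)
First, note that the paper does not prove Theorem~C at all; it is quoted from \cite[Thm.~5]{BStW99} as background motivating Theorem~B, so there is no in-paper argument to compare your attempt with. That said, your symmetry strategy is exactly the mechanism behind the cited result: take $\mu$ even on $[-a,a]$, so that $f_\mu$ is odd and the involution $r\mapsto -r(-\cdot)$ preserves both $\rat_n$ and the $L^2$-error; non-uniqueness in odd degree then follows once one rules out an odd best approximant. The restriction to odd $n$ in the statement is precisely the trace of this argument.

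Where your outline is genuinely incomplete is the step excluding odd best approximants. The sentence ``whereas $f_\mu$ is analytic near $0$'' contradicts your own setup: with $\supp(\mu)=[-a,a]$ and a density bounded away from zero, $0$ lies in the support and $f_\mu$ is certainly not analytic there. Even if you split the support to avoid the origin, a pole of the approximant at $0$ is not in itself disqualifying, so that observation carries no force. Your $w=z^2$ reduction is essentially correct --- one gets $\|f_\mu-r\|_2=\|g-s\|_2$ with $g(w)=\int d\tilde\mu(s)/(w-s)$ a Markov function on $[0,a^2]$ and $s\in\rat_{(n+1)/2}$ constrained to have a pole at $w=0$ --- but this does \emph{not} set up a comparison between two degrees of the same approximation problem; the unconstrained $\rat_n$-problem for $f_\mu$ is not a Markov problem for $g$, so ``should strictly degrade the error'' has no handle as written.

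What actually closes the argument, and what your last paragraph is circling, is to start from an explicit symmetric measure (e.g.\ close to a sum of point masses at $\pm c$, or several pairs of such) for which one can verify by direct computation that, in each odd degree up to $n_0$, an asymmetric candidate strictly beats every odd competitor; then mollify to an absolutely continuous $\mu$ satisfying the Szeg\H{o} condition, using continuity of the best-approximation distance to retain the finitely many strict inequalities. Your ``bimodal profile plus homotopy'' is the right intuition, but as stated it is a heuristic, not a proof: the strict gap has to be produced explicitly at the starting measure and then transported by a continuity argument, which is exactly the content you have deferred.
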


Our goal is to extend Theorem B to a class of \emph{complex measures} which is made precise in the definition below. Recall that a function $h$  with modulus of continuity $\omega_h$ is said to be \emph{Dini-continuous} if $\omega_h(t)/t$ is integrable on $[0,\epsilon]$ for some (hence any) $\epsilon>0$.

\begin{df}[Class $\meas$] A measure $\mu$ is said to belong to the class $\meas$ if $\supp(\mu)\subset(-1,1)$ is an interval, say $[a,b]$, and $d\mu=\dot\mu d\ed$, where $\dot\mu$ is a Dini-continuous non-vanishing function with an argument of bounded variation on $[a,b]$.
\end{df}

Observe that we deal here with $\dot\mu$, the Radon-Nikodym derivative of $\mu$  with respect to the arcsine distribution, rather then with $\mu^\prime$, the Radon-Nikodym derivative with respect to the Lebesgue measure. Our main result is:

\begin{thm}
\label{thm:main}
Let $\f:=f_\mu+r$, where $\mu\in\meas$ and $r\in\rat_m$ has no poles on $\supp(\mu)$. Then there is a unique critical point in rational $\bar H_0^2$-approximation of degree $n$ to $\f$ for all $n$ large enough.
\end{thm}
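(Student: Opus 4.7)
The plan is to follow the Morse-theoretic pattern of \cite{BStW01b}, reducing the global uniqueness question to a local one via the index theorem. Concretely, it suffices to prove that for all $n$ sufficiently large, every critical point $r_c \in \rat_n$ of $\eba_{\f,n}$ is irreducible, does not interpolate $\f$ on $\T$, and is a non-degenerate local minimum; the index theorem then pins down the sum of the Morse indices of all critical points, and the constraint is only compatible with a single critical point, necessarily of index $0$. Irreducibility and non-interpolation on $\T$ should follow for $n$ large from the interpolating characterization of critical points developed in Section~\ref{sec:points} together with the fact that $\f$ is not rational, so the bulk of the work lies in establishing non-degeneracy of the Hessian.

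For the Hessian analysis I would set up the comparison criterion of Section~\ref{sec:criterion}, but in the version adapted to complex Fourier coefficients. Since critical points correspond to multipoint Pad\'e interpolants of $\f$ with nodes prescribed by $q_c^*$ (the reflected denominator), the criterion takes the form of an inequality comparing $\|\f - r_c\|_2$ against the error of a cleverly chosen multipoint Pad\'e interpolant $\tilde r \in \rat_{n-1}$ at auxiliary nodes. The passage from real to complex Fourier coefficients is mostly bookkeeping, but it genuinely affects the quadratic form expressing the Hessian, so this step must be carried out carefully to retain the sign structure that forces non-degeneracy.

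The hard part is to verify that the comparison criterion actually applies for all large $n$. For this one needs strong asymptotics for both the denominators $q_c$ of critical points and the denominators of the auxiliary interpolants $\tilde r$; these denominators are zeros of non-Hermitian orthogonal polynomials with respect to complex measures on $[a,b]$ derived from $\mu$ and from the chosen node systems. Here the results of \cite{BY09b,uBY3,uY2} give the required strong asymptotics, provided the zeros stay on (or very close to) $[a,b]$. This is precisely where the assumption that $\dot\mu$ is non-vanishing, Dini-continuous, and has an argument of bounded variation enters: the BKT05 geometric analysis of non-Hermitian orthogonal polynomials guarantees, under bounded argument variation, that the counting measures of the zeros converge weak-$*$ to $\ed$, so that no pole escapes a small neighborhood of $[a,b]$. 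Without this control the auxiliary nodes would be unreliable and the comparison would fail; I expect this pole-localization step to be the principal obstacle.

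With the pole location and strong asymptotics secured, I would design the nodes of $\tilde r$ to produce an error of the form $(\f - \tilde r)(z) = A_n(z)/(q_c^*(z)\,\tilde q^*(z)) \cdot (1 + o(1))$ on $\T$, with an explicit leading term $A_n$ that can be paired against the analogous expression for $\f - r_c$ through the scalar product~\eqref{defscal}. The perturbation $r \in \rat_m$, having its poles off $\supp(\mu)$, contributes only a fixed rational correction that is absorbed into the interpolants for $n > m$, without disturbing the leading asymptotics. Substituting into the comparison criterion yields a strictly positive Hessian for $n$ large, and the index theorem then delivers uniqueness of the critical point, completing the proof of Theorem~\ref{thm:main}.
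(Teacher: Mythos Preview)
Your outline correctly identifies the architecture of the proof --- index theorem, comparison criterion, strong asymptotics --- and the role of the bounded-variation hypothesis via \cite{BKT05}. But several things you write are wrong or too vague to count as a proof, and one genuine gap remains.

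First, two misstatements. The comparison criterion (Theorem~\ref{thm:U}) is \emph{not} an $L^2$ inequality between $\|\f-r_c\|_2$ and the error of some $\tilde r\in\rat_{n-1}$: it is the pointwise condition $|1-(\f-\Pi_q)/(\f-L_q/q)|>2$ on $\T$ together with the winding condition $\wn(\f-\Pi_q)=1-2n$. Your formulation in terms of norms would not control the Hessian. Second, the counting measures of the zeros of $q_n$ do not converge to $\ed$; they converge to the Green equilibrium distribution of $[a,b]$ relative to $\overline\D$ (Theorem~\ref{thm:cp}). The arcsine measure is the equilibrium distribution of $[a,b]$ in $\C$, a different object.

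The real gap is the construction of $\Pi_q$. Saying ``I would design the nodes of $\tilde r$'' skips the entire content of the argument. What the paper actually does is this: using the strong asymptotics~(\ref{eq:sa}) and~(\ref{eq:cp}), the quotient $(\f-\Pi_{n-1})/(\f-L_{q_n}/q_n)$ on $\T$ reduces to $(R_{n-1}(\E_\nu;\cdot)/R_n(\E_{\{q_n\}};\cdot))^2$ up to $1+o(1)$. One must then \emph{build} an admissible scheme $\E_\nu$ so that this ratio converges uniformly on $\T$ to a fixed function $h_\nu$ satisfying $|1-h_\nu|>2$ there. The paper produces $h_\nu$ from a signed measure $\nu$ of mass~$2$ on $I^{-1}$, and manufactures the interpolation nodes $y_{j,n}$ as explicit convex combinations of consecutive zeros $\xi_{j,n}$ of $q_n$, with weights coming from $\nu$; a Riemann-sum plus Taylor-expansion argument (equations~(\ref{eq:sum1})--(\ref{eq:conv})) then yields~(\ref{eq:p4}). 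This step uses in an essential way the quantitative bound $\sum_j|\im(\xi_{j,n})|\leq\const$ from~(\ref{eq:anglem}), not merely that the zeros stay near $[a,b]$; weak-$*$ convergence of counting measures alone is insufficient to control the sum in~(\ref{eq:to0}). Your sketch gives no hint of this construction, and without it the comparison criterion cannot be verified.

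Finally, the irreducibility argument is not as automatic as you suggest: the paper argues by contradiction in two cases (the degree drop $k_{n_j}$ bounded or unbounded), and in both relies on the asymptotics~(\ref{eq:cp}) to locate the zeros of $\f-L_{q^*}/q^*$; the bare fact that $\f$ is not rational is used only to close the bounded-degree case.
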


Before we can prove the theorem, we must study in greater detail the structure of critical points, which is the object of Sections \ref{sec:points} and \ref{sec:criterion} to come.

\section{Critical Points}
\label{sec:points}

The following theory was developed in \cite{BO88,BOW90,BOW92,BSW96} when the function $f$ to be approximated is conjugate-symmetric, {\it i.e.}, $f(\bar z)=\overline{f(z)}$, and the rational approximants are seeked to be conjugate-symmetric as well. In other words, when a function with real Fourier-Taylor expansion at infinity gets approximated by a rational function with real coefficients. Surprisingly enough, this is \emph{not} subsumed in Problem 1 in that conjugate-symmetric functions need not have a best approximant out of $\rat_n$ which is conjugate-symmetric. For Markov functions, though, it is indeed the case~\cite{BKT05}. Below, we develop an analogous theory for Problem 1, that is, without conjugate-symmetry assumptions. This involves only  technical modifications of a rather mechanical nature.

Hereafter, for any $f\in L^2$, we set $f^\sigma(z) := (1/z)\overline{f(1/\bar z)}$. Clearly, $f\to f^\sigma$ is an isometric involution mapping $H^2$ onto $\bar H_0^2$ and vice-versa. Further, for any $p\in\poly_k$, we set $\check p(z):=\overline{p(1/\bar z)}$ and define its reciprocal polynomial (in $\poly_k$) to be $\widetilde p(z):=z^k\check p(z)=z^k\overline{p(1/\bar z)}$. Note that $\widetilde{p}$ has the same modulus as $p$ on $\T$ and its zeros are reflected from those of $p$ across $\T$.

\subsection{Critical Points as Orthogonal Projections}

Fix $f\in H_0^2$ and let $\eba_n:=\eba_{f,n}$ be given by (\ref{eq:ef}). It will be convenient to use complex partial derivatives with respect to $p_j$, $q_k$, $\bar p_j$, $\bar q_k$, where, for $0\leq j,k\leq n-1$, the symbols $p_k$ and $q_k$ refer to the coefficients of $p\in\poly_{n-1}$ and $q\in\mpoly_n$ as in equation (\ref{defRn}). By complex derivatives we mean the standard Wirtinger operators, {\it e.g.}, if $p_j=x_j+iy_j$ is the decomposition into real and imaginary part then $\partial/\partial p_j=(\partial/\partial x_j-i\partial/\partial y_j)/2$ and $\partial/\partial \bar p_j=(\partial/\partial x_j+i\partial/\partial y_j)/2$. The standard rules for derivation are still valid, obviously $\partial g(p_j)/\partial\bar p_j=0$ if $g$ is holomorphic, and it is straightforward that $\overline{\partial h/\partial p_j}=\partial \bar h/\partial \bar p_j$ for any function $h$. In particular, since $\eba_n$ is real,
\begin{equation}
\label{dconj}
\frac{\partial\eba_n}{\partial \bar p_j} = \overline{\left(\frac{\partial\eba_n}{\partial p_j}\right)} \quad \mbox{and} \quad \frac{\partial\eba_n}{\partial \bar q_k} = \overline{\left(\frac{\partial\eba_n}{\partial q_k}\right)}.
\end{equation}
Thus, writing $\eba_n\circ J(p,q)=\langle f-p/q,f-p/q\rangle$ and differentiating under the integral sign, we obtain that a critical pair $(p_c,q_c)$ of $\eba_n\circ J$ is characterized by the relations
\begin{eqnarray}
\label{eq:partials1}
\frac{\partial\eba_n}{\partial p_j}\left(p_c,q_c\right) &=& \left\langle \frac{z^j}{q_c},f-\frac{p_c}{q_c}\right\rangle =  0, \quad j\in\{0,\ldots,n-1\}, \\
\label{eq:partials2}
\frac{\partial\eba_n}{\partial q_k}\left(p_c,q_c\right) &=& - \left\langle \frac{z^kp_c}{q_c^2},f-\frac{p_c}{q_c}\right\rangle = 0, \quad k\in\{0,\ldots,n-1\}.
\end{eqnarray}
Equation (\ref{eq:partials1}) means that $p_c/q_c$ is the orthogonal projection of $f$ onto $V_{q_c}$, where for any $q\in\mpoly_n$ we let $V_q:=\{p/q:~p\in\poly_{n-1}\}$ to be the $n$-dimensional linear subspace of $\bar H_0^2$ consisting of rational functions with denominator $q$. In what follows, we consistently denote the orthogonal projection of $f$ onto $V_q$ by $L_q/q$, where $L_q\in\poly_{n-1}$ is uniquely characterized by the fact that
\begin{equation}
\label{eq:orth}
\left\langle f-\frac{L_q}{q},\frac{p}{q} \right\rangle = 0 \quad \mbox{for any} \quad p\in\poly_{n-1}.
\end{equation}
Taking equation (\ref{eq:partials2}) into account, we conclude from what precedes that critical points of $\eba_n$ are precisely  $\rat_n$-functions of the form $L_{q_c}/q_c$, where $q_c\in\mpoly_n$ satisfies
\begin{equation}
\label{critphiq}
\left\langle\frac{z^kL_{q_c}}{{q_c}^2},f-\frac{L_{q_c}}{q_c}\right\rangle=0, 
\quad k\in\{0,\ldots,n-1\}.
\end{equation}
Now, it is appearent from (\ref{eq:orth}) that $L_q$ is a smooth function of $q$, therefore we define a smooth map $\map_n$ on $\mpoly_n$
by setting
\begin{equation}
\label{eq:psif}
\begin{array}{rll}
\map_n=\map_{f,n}:\mpoly_n &\to&  [0,\infty) \\
q &\mapsto& \| f-L_q/q \|_2^2
\end{array}.
\end{equation}
By construction, $\eba_n$ attains a local minimum at $r=L_{q_l}/q_l$ if and only if $\map_n$ attains a local minimum at $q_l$, and the assumed values are the same. More generally, $r\in\rat_n$ is a critical point of $\eba_n$ if and only if $r=L_{q_c}/q_c$, where $q_c\in\mpoly_n$, is a critical point of $\map_n$. This is readily checked upon comparing (\ref{critphiq}) with the result of the following computation:
\begin{eqnarray}
\frac{\partial\map_n}{\partial q_k}(q) &=& \left\langle\frac{(\partial L_q/\partial q_k)q-z^kL_q}{q^2},f-\frac{L_q}{q}\right\rangle + \left\langle f-\frac{L_q}{q},-\frac{(\partial L_q/\partial \bar q_k)}{q}\right\rangle \nonumber \\
\label{eq:partials}
{} &=& -\left\langle\frac{z^kL_q}{q^2},f-\frac{L_q}{q}\right\rangle, \quad k\in\{0,\ldots,n-1\},
\end{eqnarray}
where we applied (\ref{eq:orth}) using that the derivatives of $L_q$ lie in $\poly_{n-1}$ and that $\partial q/\partial \bar q_k=0$. For simplicity, we drop from now on the subscript ``$c$'' we used so far as a mnemonic for ``critical''. Altogether we proved the following result: 
\begin{prop}
\label{prop:1}
For $f\in\bar H_0^2$, let $\eba_n$ and $\map_n$ be defined by (\ref{eq:ef}) and (\ref{eq:psif}), respectively. Then $r\in\rat_n$ is a critical point of $\eba_n$ if and only if $r=L_{q}/q$ and $q\in\mpoly_n$ is a critical point of $\map_n$.
\end{prop}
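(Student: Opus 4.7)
The proposition is essentially a repackaging of the calculations already spread across equations (3.1)--(3.7), so my plan is simply to consolidate those pieces into two careful implications, using the characterization (3.4) of $L_q$ as an orthogonal projection.

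For the forward implication, I would start from a critical point $r\in\rat_n$ of $\eba_n$ and pick, by definition, a critical pair $(p_c,q_c)$ with $r=p_c/q_c$. Equation (3.2) asserts that $f-p_c/q_c$ is orthogonal in $L^2$ to every $z^j/q_c$ for $j=0,\dots,n-1$, hence orthogonal to the whole subspace $V_{q_c}$. Since $p_c/q_c\in V_{q_c}$, this identifies $p_c/q_c$ with the orthogonal projection of $f$ onto $V_{q_c}$; by the uniqueness of the projection and the defining property (3.4), one has $p_c=L_{q_c}$, so $r=L_{q_c}/q_c$. Substituting $p_c=L_{q_c}$ into (3.3) yields $\langle z^kL_{q_c}/q_c^2,\,f-L_{q_c}/q_c\rangle=0$ for every $k$, which, by the identity (3.7), is exactly $\partial\map_n/\partial q_k(q_c)=0$; combined with the conjugate relations (3.1), this makes $q_c$ a critical point of $\map_n$.

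For the reverse implication, I would take a critical point $q\in\mpoly_n$ of $\map_n$ and propose the pair $(L_q,q)\in\poly_{n-1}\times\mpoly_n$ as a critical pair for $\eba_n\circ J$. The defining identity (3.4) of $L_q$ gives (3.2) for $(L_q,q)$ without further work, and the chain-rule computation (3.6)--(3.7), which relies on $\partial L_q/\partial q_k\in\poly_{n-1}$ so that (3.4) kills the first bracket, together with $\partial q/\partial\bar q_k=0$ to kill the second, rewrites the vanishing of $\partial\map_n/\partial q_k$ as precisely (3.3) for $(L_q,q)$. Hence $(L_q,q)$ is a critical pair, and $r=L_q/q$ is a critical point of $\eba_n$.

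I do not expect any real obstacle: every ingredient is already in place, and the only things to be careful about are bookkeeping the Wirtinger derivatives (in particular that (3.1) lets one dispense with separately treating the $\bar p_j,\bar q_k$ partials) and invoking (3.4) twice, once to promote (3.2) to the full projection statement and once to cancel the derivative term in (3.7). Both are routine.
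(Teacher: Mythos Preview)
Your proposal is correct and follows essentially the same route as the paper: the proposition is stated there as a summary of the preceding computations (3.2)--(3.7), and you have simply reorganized those into explicit forward and reverse implications, using (3.4) to identify $p_c=L_{q_c}$ and the chain-rule identity (3.7) to match the criticality conditions (3.3) and (3.5). The only cosmetic remark is that (3.6) is the definition of $\map_n$ rather than part of the computation, so the relevant derivative identity is just (3.7).
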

In view of Proposition \ref{prop:1}, we shall extend to $\map_n$ the terminology introduced for $\eba_n$ and say that a critical point $q\in\mpoly_n$ of $\map_n$ is irreducible if $L_q$ and $q$ are coprime. 

\subsection{Interpolation Properties of Critical Points}
\label{subseccrit}

If we denote with a superscript ``$\perp$'' the orthogonal complement in $\bar H_0^2$,  it is elementary to check that
\begin{equation}
\label{eq:vqperp}
V_q^\perp =\left\{\frac{\widetilde q}{q}u:~u\in\bar H_0^2\right\}, \quad \bar H_0^2 = V_q\oplus V_q^\perp.
\end{equation}
Hence, by (\ref{eq:orth}), there exists $u_q = u_{f,q}\in \bar H_0^2$ such that
\begin{equation}
\label{eq:ip}
fq - L_q = \widetilde q u_q.
\end{equation}
Relation (\ref{eq:ip}) means that $L_q/q$ interpolates $f$ at the reflections of the zeros of $q$ across $\T$. Assume now that $q\in\mpoly_n$ is a critical point of $\map_n$. Then, combining (\ref{eq:ip}) and (\ref{eq:partials}), we derive that 
\begin{equation}
\label{tradint}
0 = \left\langle \frac{pL_q}{q^2}, f-\frac{L_q}{q} \right\rangle = \left\langle \frac{pL_q}{q^2}, \frac{\widetilde q u_q}{q} \right\rangle = \int_\T\frac{p(\tau)}{q(\tau)} \frac{(L_qu_q^\sigma)(\tau)}{\widetilde q(\tau)}\frac{d\tau}{2\pi i},\qquad p\in\poly_{n-1}.
\end{equation}
Since $L_qu_q^\sigma/\widetilde q\in H^2$, we see by letting $p$ range over elementary divisors of $q$ and applying the residue formula that (\ref{tradint}) holds if and only if each zero of $q$ is a zero of $L_qu_q^\sigma$ of the same multiplicity or higher. That is, $q$ is a critical point of $\map_n$ if and only if $q$ divides $L_qu_q^\sigma$ in $H^2$.

Let $d\in\mpoly_k$ be the monic g.c.d. of $L_q$ and $q$, with $0\leq k\leq n-1$. Writing $L_q = dp^*$ and $q=dq^*$, where $p^*$ and $q^*$ are coprime, we deduce that $q^*$ divides $u_q^\sigma$ in $H^2$ or equivalently that $u_q=\check{q^*} h$ for some $h\in\bar H_0^2$. Besides, it follows from (\ref{eq:orth}), applied with $p=dv$ and $v\in\poly_{n-k-1}$, that $p^*=L_{q^*}$. Therefore, upon dividing (\ref{eq:ip}) by $d$, we get 
\begin{equation}
\label{interpw}
fq^*-L_{q^*}=\frac{\widetilde{q}\check{q^*}}{d} h,
\end{equation}
implying that $u_{q^*}=\widetilde{d}\check{q^*} h/d$. In particular, $q$, thus {\it a fortiori} $q^*$, divides $u_{q^*}^\sigma$ in $H^2$, whence $q^*$ is critical for $\map_{n-k}$ by what we said before. Finally, dividing (\ref{interpw}) by $q^*$ and taking into account the definition of the reciprocal polynomial, we find that  we established the following result.
\begin{prop}
\label{prop:2}
Let $q$ be a critical point of $\map_n$ and $d\in\mpoly_k$ be the monic g.c.d. of $L_q$ and $q$ with $0\leq k\leq n-1$. Then $q^*=q/d\in\mpoly_{n-k}$ is an irreducible critical point of $\map_{n-k}$, and $L_{q^*}/q^*$ interpolates $f$ at the zeros of $\check{q^*}^2\check d/z$ in Hermite's sense on $\overline{\C}\setminus\overline{\D}$, that is, counting multiplicities including at infinity.
\end{prop}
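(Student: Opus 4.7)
The plan is to reorganize the derivation sketched in the paragraphs immediately preceding the statement into a self-contained proof, then read off the interpolation data from the final identity. Criticality of $q$ for $\map_n$ is, by (\ref{tradint}) and the residue argument that follows it, equivalent to divisibility of $L_q u_q^\sigma$ by $q$ in $H^2$. Writing $L_q = dp^*$ and $q = dq^*$ with $p^*, q^*$ coprime, the factor $d$ cancels, forcing $q^* \mid u_q^\sigma$ and hence $u_q = \check{q^*}\,h$ for some $h \in \bar H_0^2$. Using (\ref{eq:orth}) with test polynomials of the form $dv$, $v \in \poly_{n-k-1}$, one identifies $p^* = L_{q^*}$; substituting this into (\ref{eq:ip}) and dividing by $d$ yields (\ref{interpw}).

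From (\ref{interpw}) the two first claims about $q^*$ follow at once. Irreducibility is immediate, since $L_{q^*} = p^*$ is coprime to $q^*$ by construction. For criticality of $q^*$ for $\map_{n-k}$, I would rewrite (\ref{interpw}) in the form $f q^* - L_{q^*} = \widetilde{q^*}\, u_{q^*}$ with $u_{q^*} = \widetilde d\, \check{q^*}\, h/d$ (using the factorization $\widetilde q = \widetilde d\, \widetilde{q^*}$) and observe that $q$, and \emph{a fortiori} $q^*$, divides $u_{q^*}^\sigma$ in $H^2$; the criterion following (\ref{tradint}) then delivers criticality.

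For the interpolation claim, I would divide (\ref{interpw}) by $q^*$ and expand $\widetilde q = z^n \check q = z^n \check d\,\check{q^*}$ to get
\[
f - \frac{L_{q^*}}{q^*} \;=\; \frac{z^n}{q}\,\check{q^*}^2\,\check d\,h.
\]
The prefactor $z^n/q$ is holomorphic and non-vanishing in $\overline{\C}\setminus\overline{\D}$ and tends to $1$ at infinity, so the zeros of the right-hand side in $\overline{\C}\setminus\overline{\D}$, counted with multiplicity, coincide with those of $\check{q^*}^2\check d\cdot h$. Here $\check{q^*}^2\check d$ provides a double zero at each of the $n-k$ reflections of the roots of $q^*$ across $\T$ and a simple zero at each of the $k$ reflections of the roots of $d$, while $h \in \bar H_0^2$ contributes a zero of order at least one at infinity. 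These are precisely the zeros of $\check{q^*}^2\check d/z$: division by $z$ is the bookkeeping device that converts the nonzero value of $\check{q^*}^2\check d$ at infinity into the required simple zero there, so Hermite interpolation at the stated nodes follows. Since the entire argument is a reorganization of the prose preceding the statement, no substantive obstacle is to be expected; the only point requiring a touch of care is the accounting at infinity just described.
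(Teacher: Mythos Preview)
Your proposal is correct and follows essentially the same route as the paper: the paper's proof \emph{is} the prose preceding the statement, and you have simply reorganized it and spelled out the final interpolation step (dividing (\ref{interpw}) by $q^*$ and expanding $\widetilde q = z^n\check d\,\check{q^*}$) that the paper compresses into one sentence. The only minor remark is that your phrase ``coincide with those of $\check{q^*}^2\check d\cdot h$'' slightly overstates matters, since $h$ may contribute extra zeros; but interpolation only requires vanishing \emph{at least} at the zeros of $\check{q^*}^2\check d/z$, which is exactly what you establish.
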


The converse is equally easy: if $q^*$ is an irreducible critical point of $\map_{n-k}$, and $L_{q^*}/q^*$ interpolates $f$ at the zeros of $\check{q^*}^2\check d/z$ in Hermite's sense for some $d\in\mpoly_k$, then $q=q^*d$ is a critical point of $\varphi_n$ and $d$ is the monic g.c.d. of $q$ and $L_q$. This we shall not need.

It is immediately seen from Proposition \ref{prop:2} that a critical point of $\eba_{f,n}$ must interpolate $f$ with order 2 at the reflections  of its poles across $\T$; for best approximants, this property is classical \cite{Erohin,Levin}. 

\subsection{Smooth Extension of $\map_n$}
\label{subsec:3}

One of the advantages of $\map_n$, as compared to $\eba_n$, is that its domain of definition can be compactified, which is essential to rely on methods from differential topology. To do that, however, we need to place an additional requirement on $f$.

Let us denote by $\bar H_0$ the subset of $\bar H_0^2$ comprised of functions that extend holomorphically across $\T$. \emph{Hereafter we will suppose that} $f\in\bar H_0$ and pick $\rho=\rho(f)<1$ such that $f$ is holomorphic in $\{|z|>\rho-\epsilon\}$ for some $\epsilon>0$. In particular, $f$ is holomorphic across $\T_\rho:=\{|z|=\rho\}$.

Denote by $\overline\mpoly_n$ and $\mpoly_n^{1/\rho}$ respectively the closure of $\mpoly_n$ and the set of monic polynomials with zeros in $\D_{1/\rho}:=\{|z|<1/\rho\}$; as usual,  we regard these as subsets of $\C^n$ when coordinatized by their coefficients except the leading one. This way $\mpoly_n^{1/\rho}$ becomes an open neighborhood of the compact set $\overline\mpoly_n$, which is easily seen to consist of polynomials with zeros in $\overline\D$. Also, $q$ lies on the boundary $\partial\overline\mpoly_n=\overline\mpoly_n\setminus\mpoly_n$ if and only if it is a monic polynomial of degree $n$  having at least one zero of modulus 1 and no zero of modulus strictly greater then 1.

For $q\in\mpoly_n$, since $q/\widetilde q$ is unimodular on $\T$, it follows from (\ref{eq:ip}) that
\begin{equation}
\label{eq:anlook}
\map_n(q) = \left\|f-\frac{L_q}{q}\right\|_2^2 = \|u_q\|_2^2 = \int_\T (u_qu_q^\sigma)(\tau)\frac{d\tau}{2\pi i}.
\end{equation}
In addition, taking into account the Cauchy formula, the analyticity of $L_q/\widetilde q$ in $\D$, the analyticity of $f$ across $\T_\rho$, and  the definition of the $\sigma$-operation, we obtain
\begin{eqnarray}
\label{eq:uq}
u_q(z)=u_{f,q}(z) &=& \frac{1}{2\pi i}\int_{\T_\rho} \frac{f(\tau)q(\tau)}{\widetilde q(\tau)} \frac{d\tau}{z-\tau}, \quad |z|>\rho, \\
\label{eq:uqs}
u_q^\sigma(z) &=& \frac{1}{2\pi i}\int_{\T_{1/\rho}} \frac{f^\sigma(\tau)\widetilde q(\tau)}{q(\tau)} \frac{d\tau}{\tau-z}, \quad |z|<1/\rho\\
\label{eq:lq}
L_q(z)=L_{f,q}(z) &=& \int_{\T_\rho} \frac{f(\tau)}{\widetilde q(\tau)}\frac{q(z)\widetilde q(\tau)-\widetilde q(z)q(\tau)}{z-\tau}\frac{d\tau}{2\pi i},  \quad |z|>\rho.
\end{eqnarray}
Now, if $q\in\mpoly_n^{1/\rho}$, then $\widetilde q$ has all its zeros of modulus greater then $\rho$,  therefore (\ref{eq:lq}) and (\ref{eq:uq}) are well-defined and smooth with respect to the coefficients of $q$, with values in $\poly_{n-1}$ and $\bar H_0^2(\overline{\C}\setminus\overline{\D_\rho})$ respectively. Because evaluation at $\tau\in\T$ is uniformly bounded with respect to $\tau$ on $\bar H_0^2(\overline{\C}\setminus\overline{\D_\rho})$, $\map_n$ in turn extends smoothly to $\mpoly_n^{1/\rho}$ in view of (\ref{eq:anlook}). Moreover, differentiating under the integral sign, we obtain
\begin{equation}
\label{eq:par}
\frac{\partial\map_n}{\partial q_j}(q) = \int_\T \left(\frac{\partial u_q}{\partial q_j}(\tau)u_q^\sigma(\tau)  + u_q(\tau)\frac{\partial u_q^\sigma}{\partial q_j}(\tau) \right) \frac{d\tau}{2\pi i},
\end{equation}
with
\begin{eqnarray}
\label{eq:par1}
\frac{\partial u_q}{\partial q_j}(z) &=& \frac{1}{2\pi i} \int_{\T_\rho} \frac{\tau^jf(\tau)}{\widetilde q(\tau)} \frac{d\tau}{z-\tau},
\quad |z|>\rho,  \\
\label{eq:par2}
\frac{\partial u_q^\sigma}{\partial q_j}(z) &=& \frac{1}{2\pi i} 
\int_{\T_{1/\rho}} \frac{-\tau^jf^\sigma(\tau)\widetilde q(\tau)}{q^2(\tau)} \frac{d\tau}{\tau-z},\quad |z|<1/\rho,
\end{eqnarray}
for $j=0,\ldots,n-1$. To recap, we have proved:
\begin{prop}
\label{prop:3}
Let $f\in\bar H_0$. Then $\map_n$ extends to a smooth function in some neighborhood of $\overline\mpoly_n$ and so do $L_q$ and $u_q$ with values in $\poly_{n-1}$ and $\bar H_0^2(\overline{\C}\setminus\overline{\D_\rho})$ respectively. In addition, (\ref{eq:par}), (\ref{eq:par1}), and (\ref{eq:par2}) hold.
\end{prop}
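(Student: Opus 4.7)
The plan is to take the integral representations (\ref{eq:uq}), (\ref{eq:uqs}), (\ref{eq:lq}) as the \emph{definitions} of $u_q$, $u_q^\sigma$, $L_q$ on the larger domain $\mpoly_n^{1/\rho}$, and to verify that on the original domain $\mpoly_n$ they agree with the objects already introduced. The point is that $\mpoly_n^{1/\rho}$ is open in $\C^n$ by continuous dependence of roots on coefficients, and contains $\overline\mpoly_n$; while for any $q\in\mpoly_n^{1/\rho}$ the zeros of $\widetilde q$ all have modulus strictly greater than $\rho$. Thus $\widetilde q$ is bounded below on $\T_\rho$ and $q$ is bounded below on $\T_{1/\rho}$, locally uniformly in the coefficients. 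Combined with the analyticity of $f$ and $f^\sigma$ in neighborhoods of the respective contours (from $f\in\bar H_0$ and the choice of $\rho$), this makes the integrands smooth in the coefficients of $q$ with no singularity on the contours.

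The verification on $\mpoly_n$ goes as follows. From (\ref{eq:ip}) we have $u_q=fq/\widetilde q-L_q/\widetilde q$; since $\widetilde q$ has no zeros in $\overline\D$ when $q\in\mpoly_n$, the term $L_q/\widetilde q$ is holomorphic in a neighborhood of $\overline\D$, and its Cauchy integral against $(z-\tau)^{-1}$ over $\T_\rho$ vanishes for $|z|>\rho$. Applying the exterior Cauchy formula $f(z)=\int_{\T_\rho}f(\tau)(z-\tau)^{-1}d\tau/(2\pi i)$ for $|z|>\rho$ (using that $f$ is holomorphic in $\{|\tau|>\rho-\epsilon\}$ and vanishes at infinity) then yields (\ref{eq:uq}); formula (\ref{eq:uqs}) is obtained analogously, or by applying the $\sigma$-involution. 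To prove (\ref{eq:lq}), I would substitute these Cauchy representations into $L_q=fq-\widetilde q\, u_q$ and factor the resulting kernel $(q(z)\widetilde q(\tau)-\widetilde q(z)q(\tau))/(z-\tau)$; since the numerator vanishes when $z=\tau$, the quotient is a polynomial of degree $\le n-1$ in $z$, in agreement with $L_q\in\poly_{n-1}$.

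Smoothness of the extensions is then straightforward. The right-hand sides of (\ref{eq:uq}) and (\ref{eq:uqs}) take values in $\bar H_0^2(\overline\C\setminus\overline{\D_\rho})$ and in the corresponding Hardy space on $\D_{1/\rho}$ respectively, and depend smoothly on $q\in\mpoly_n^{1/\rho}$ in these norms because the coefficients enter only through polynomials whose denominators are uniformly bounded away from zero on the contour. Since $\T$ lies inside the annulus $\rho<|z|<1/\rho$, evaluation at $\tau\in\T$ is bounded uniformly in $\tau$ on these Hardy spaces, and (\ref{eq:anlook}) then shows $\map_n$ extends smoothly to $\mpoly_n^{1/\rho}$. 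The formulas (\ref{eq:par1}), (\ref{eq:par2}) and (\ref{eq:par}) follow by differentiating under the integral sign; note that the coefficients of $\widetilde q$ are the complex conjugates of those of $q$, so $\partial\widetilde q/\partial q_j=0$, which explains why only the $q(\tau)$-factor is differentiated in the formulas.

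The main substantive step is the verification of (\ref{eq:lq}): it amounts to a careful algebraic manipulation after substitution of the Cauchy representations, together with the degree count for the factored kernel. Everything else is routine continuity and interchange of differentiation with integration, made rigorous by the locally uniform separation of the contours $\T_\rho$ and $\T_{1/\rho}$ from the zeros of $\widetilde q$ and $q$ respectively, as $q$ varies in a compact neighborhood of any $q_0\in\overline\mpoly_n$.
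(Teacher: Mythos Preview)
Your proposal is correct and follows essentially the same approach as the paper: derive the integral representations (\ref{eq:uq})--(\ref{eq:lq}) on $\mpoly_n$ from (\ref{eq:ip}) via the Cauchy formula and the analyticity of $L_q/\widetilde q$ in $\D$, then use them as the definition of the extension to $\mpoly_n^{1/\rho}$, invoking the fact that the zeros of $\widetilde q$ stay off $\T_\rho$ there; smoothness and the derivative formulas follow by differentiation under the integral sign. Your remarks on the factored kernel in (\ref{eq:lq}) and on $\partial\widetilde q/\partial q_j=0$ (since the coefficients of $\widetilde q$ are $\bar q_0,\ldots,\bar q_{n-1}$) make explicit points the paper leaves implicit, but there is no substantive difference in strategy.
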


We shall continue to denote the extension whose existence is asserted in Proposition \ref{prop:3} by $\map_{f,n}$, or simply $\map_n$ if $f$ is understood from the context.

\subsection{Critical points on the boundary}

Having characterized the critical points of $\map_n$ on $\mpoly_n$ in Section \ref{subseccrit}, we need now to describe the critical points that it may have on $\partial\overline\mpoly_n$. We shall begin with the case where all the roots of the latter lie on $\T$.

Let $f\in\bar H_0$ and assume $v(z)=(z-\xi)^k$, $\xi\in\T$, is a critical point of $\map_k$. It immediately follows from  (\ref{eq:uq}), the Cauchy formula, and the definition of $v$ that
\begin{equation}
\label{eq:lquqT}
\widetilde v = e^{i\theta}v, \quad\mbox{where}\quad e^{i\theta} := (-\bar\xi)^k, \quad L_v \equiv 0, \quad \mbox{and} \quad u_{v} = e^{-i\theta}f.
\end{equation}
In this case, equations (\ref{eq:par1}) and (\ref{eq:par2}) become 
\begin{eqnarray}
\frac{\partial u_{v}}{\partial q_j}(z) &=& \frac{e^{-i\theta}}{2\pi i} \int_{\T_\rho} \frac{\tau^jf(\tau)}{v(\tau)} \frac{d\tau}{z-\tau}, \quad |z|>\rho,  \nonumber \\
\frac{\partial u_{v}^\sigma}{\partial q_j}(z) &=& \frac{-e^{i\theta}}{2\pi i} \int_{\T_{1/\rho}} \frac{\tau^jf^\sigma(\tau)}{v(\tau)} \frac{d\tau}{\tau-z}, \quad |z|<1/\rho. \nonumber
\end{eqnarray}
Plugging these expressions into (\ref{eq:par}), we obtain
\[
0 = \frac{\partial \map_{k}}{\partial q_j}(v) = \int_{\partial\A_\rho}\frac{\tau^j(ff^\sigma)(\tau)}{v(\tau)}\frac{d\tau}{2\pi i}, \quad j\in\{0,\ldots,k-1\},
\]
where $\A_\rho:=\{\rho<|z|<1/\rho\}$ with positively oriented boundary $\partial\A_\rho$, and we used the Fubini-Tonelli theorem. By taking linear combinations of the previous equations, we deduce from the Cauchy formula that
\[
0 = \int_{\partial\A_\rho}\frac{(ff^\sigma)(\tau)}{(\tau-\xi)^l}\frac{d\tau}{2\pi i} = \frac{(ff^\sigma)^{(l-1)}(\xi)}{(l-1)!}, \quad l\in\{1,\ldots,k\}.
\]
Hence $v$ divides $ff^\sigma$, when viewed as a holomorphic function in $\A_\rho$. Consequently, since $\zeta\in\T$ is a zero of $f$ if and only if it is a zero $f^\sigma$, we get that $f$ vanishes at $\xi$ with multiplicity $\lfloor(k+1)/2\rfloor$, where $\lfloor x \rfloor$ is the integer part of $x$.

\noindent
Next we consider the case where $q$ is a critical point of $\map_n$ having exactly one root on $\T$: $q=vq^*$ with $v(z)=(z-\xi)^k$, $\xi\in\T$, and $q^*\in\mpoly_{n-k}$. Denote by $\mathcal{Q}$ and $\mathcal{V}$ some neighborhoods of $q^*$ and $v$, in $\mpoly_{n-k}^{1/\rho}$ and $\mpoly_k^{1/\rho}$ respectively, taking them so small that each $\chi\in\mathcal{Q}$ is coprime to each $\nu\in\mathcal{V}$; this is possible since $q^*$ and $v$ are coprime. Then, $(\chi,\nu)\mapsto\chi\nu$ is a diffeomorphism from $\mathcal{Q}\times\mathcal{V}$ onto a neighborhood of $q$ in $\mpoly_n^{1/\rho}$. In particular, the fact that $q$ is a critical point of $\map_{n}$ means that $q^*$ is a critical point of $\Theta$ and $v$ a critical point of $\Xi$, where
\[
\begin{array}{ccc}
\begin{array}{rll}
\Theta : \mathcal{Q} &\to&  [0,\infty) \\
\chi &\mapsto& \map_{n}(v \chi)
\end{array}
&\mbox{and}&
\begin{array}{rll}
\Xi : \mathcal{V} &\to&  [0,\infty) \\
\nu &\mapsto& \map_{n}(\nu q^*).
\end{array}
\end{array}
\]
Since $\widetilde{v\chi}=e^{i\theta}v\widetilde{\chi}$, where $e^{i\theta}$ is as in (\ref{eq:lquqT}), it follows from (\ref{eq:uq}) that $u_{v\chi}=e^{-i\theta}u_{\chi}$, and therefore by (\ref{eq:anlook}) that $\Theta = \map_{n-k|\mathcal{Q}}$, implying that $q^*$ is a critical point of $\map_{n-k}$. In another connection, shrinking $\mathcal{V}$ if necessary, we may assume there exists $\varrho>\rho$ such that $\mathcal{V}\subset\mpoly_k^{1/\varrho}$.  Put for simplicity $w:=u_{q^*}=u_{f,q^*}$, which is clearly an element of $\bar H_0$ by (\ref{eq:uq}). Computing with the latter formula yields for any $\nu\in\mathcal{V}$ that
\begin{eqnarray}
u_{w,\nu}(z) &=& \frac{1}{2\pi i}\int_{\T_\varrho} \frac{(w\nu)(\tau)}{\widetilde \nu(\tau)}\frac{d\tau}{z-\tau}  = \frac{1}{2\pi i} \int_{\T_\varrho} \frac{1}{2\pi i}\int_{\T_\rho} \frac{(fq^*)(t)}{\widetilde q^*(t)}\frac{dt}{\tau-t} \frac{\nu(\tau)}{\widetilde \nu(\tau)}\frac{d\tau}{z-\tau}  \nonumber \\
{} &=& \frac{1}{2\pi i}\int_{\T_\rho} \frac{(f\nu q^*)(t)}{\widetilde{(\nu q^*)}(t)}\frac{dt}{z-t} = u_{f,\nu q^*}(z), \quad |z|>\varrho, \nonumber
\end{eqnarray}
where we used the Fubini-Tonelli theorem and the Cauchy integral formula. Thus, we derive from (\ref{eq:anlook}) that
\[
\Xi(\nu) = \map_{f,n}(\nu q^*) = \|u_{f,\nu q^*}\|_2^2 = \|u_{w,\nu}\|_2^2 = \map_{w,k}(\nu), \quad \nu\in\mathcal{V}.
\]
As $v$ is a critical point of $\Xi$, we see that it is also critical for $\map_{w,k}$, so by the case previously considered we conclude that $w=u_{f,q^*}$ vanishes at $\xi$ with multiplicity $\lfloor(k+1)/2\rfloor$. By (\ref{eq:ip}), this is equivalent to the fact that $L_{q^*}/q^*=L_q/q$  interpolates $f$ at the zeros of $d(z) = (z-\xi)^{\lfloor(k+1)/2\rfloor}$ in Hermite's sense.

Finally, the case where $q$ is arbitrarily located on $\partial\overline\mpoly_n$ is handled the same way upon writing $q=q^*v_1\ldots v_\ell$, where $v_j(z)=(z-\xi_j)^{k_j}$ for some $\xi_j\in\T$, and introducing a product neighborhood $\mathcal{Q}\times\mathcal{V}_1\times\ldots\times\mathcal{V}_\ell$ of $q^*v_1\ldots v_\ell$ to proceed with the above analysis on each of the corresponding maps $\Theta$, $\Xi_1,\ldots,\Xi_\ell$. Thus, taking into account Proposition \ref{prop:2} and the fact that $v_j$ and $\check v_j$ have the same zeros in $\overline{\C}$, we obtain:
\begin{prop}
\label{prop:4}
Let $f\in \bar H_0$ and $q=vq^*$, where $v=\prod (z-\xi_j)^{k_j}$, $\xi_j\in\T$, $\deg(v)=k$, and $q^*\in\mpoly_{n-k}$. Assume that $q$ is a critical point of $\map_n=\map_{f,n}$. Then $q^*$ is a critical point of $\map_{n-k}$. Moreover, if we write $q^*=q_1d_1$ where $d_1$ is  the monic g.c.d. of $L_{q^*}$ and $q^*$, then $L_{q^*}/q^* = L_q/q$ interpolates $f$ at the zeros of $\check{q_1}^2\check{d_1}\check{d}/z$ in Hermite's sense on $\overline{\C}\setminus\D$, where $d(z) = \prod(z-\xi_j)^{\lfloor(k_j+1)/2\rfloor}$. 
\end{prop}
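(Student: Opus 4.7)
The plan is to reduce to the single-boundary-root analysis already carried out in the discussion preceding the proposition, by means of a product decomposition of a neighborhood of $q$. Concretely, I would write $q = q^* v_1\cdots v_\ell$ with $v_j(z) = (z-\xi_j)^{k_j}$ and the $\xi_j\in\T$ distinct, and then choose small open neighborhoods $\mathcal{Q}\subset\mpoly_{n-k}^{1/\rho}$ and $\mathcal{V}_j\subset\mpoly_{k_j}^{1/\rho}$ of $q^*$ and $v_j$ whose elements are pairwise coprime (possible since the boundary factors of $q$ are themselves pairwise coprime and coprime to $q^*$). The multiplication map $(\chi,\nu_1,\ldots,\nu_\ell)\mapsto \chi\nu_1\cdots\nu_\ell$ is then a diffeomorphism onto a neighborhood of $q$ in $\mpoly_n^{1/\rho}$, so criticality of $q$ for $\map_n$ is equivalent to simultaneous criticality of $q^*$ for $\Theta(\chi):=\map_n(\chi v_1\cdots v_\ell)$ on $\mathcal{Q}$ and of each $v_j$ for $\Xi_j(\nu):=\map_n(\nu\, q_j^\sharp)$ on $\mathcal{V}_j$, where $q_j^\sharp := q^*\prod_{i\neq j}v_i$.

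I would then identify each partial restriction with a genuine $\map$-functional as in the $\ell=1$ case. Since $\widetilde{v_i} = (-\bar\xi_i)^{k_i}v_i$, one has $\widetilde{v_1\cdots v_\ell} = c\, v_1\cdots v_\ell$ with $|c|=1$, hence by (\ref{eq:uq}) $u_{f,\chi v_1\cdots v_\ell} = c^{-1}u_{f,\chi}$, and via (\ref{eq:anlook}) $\Theta = \map_{n-k}$ on $\mathcal{Q}$; this already yields the first assertion. For the $\Xi_j$'s, put $w_j := u_{f,q_j^\sharp}\in\bar H_0$ and shrink $\mathcal{V}_j$ if necessary so that it lies in $\mpoly_{k_j}^{1/\varrho}$ for some $\varrho>\rho$; the Fubini--Cauchy computation already exhibited in the single-boundary-root case then gives $u_{f,\nu q_j^\sharp} = u_{w_j,\nu}$, so $\Xi_j = \map_{w_j,k_j}$ on $\mathcal{V}_j$. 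In particular $v_j$ is a critical point of $\map_{w_j,k_j}$, and the single-boundary-root analysis already proved in the discussion forces $w_j = u_{f,q_j^\sharp}$ to vanish at $\xi_j$ with multiplicity $\lfloor(k_j+1)/2\rfloor$.

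To extract the interpolation statement about $L_{q^*}/q^*$, I would observe the analogous identity $\widetilde{V_j} = c_j V_j$ with $|c_j|=1$ for $V_j := \prod_{i\neq j}v_i$; substitution into (\ref{eq:uq}) causes $V_j$ to cancel in numerator and denominator of the Cauchy kernel, leaving $u_{f,q_j^\sharp} = c_j^{-1}u_{f,q^*}$. Consequently $u_{f,q^*}$ vanishes at each $\xi_j$ to order $\lfloor(k_j+1)/2\rfloor$; since the zeros of $q^*$ lie in $\D$, $\widetilde{q^*}$ has no zero on $\T$, so relation (\ref{eq:ip}) applied to $q^*$ translates this vanishing into interpolation of $f$ by $L_{q^*}/q^*$ at the zeros of $d(z) := \prod(z-\xi_j)^{\lfloor(k_j+1)/2\rfloor}$, which coincide with those of $\check d$ on $\overline{\C}\setminus\D$. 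A direct computation with (\ref{eq:lq}) using $\widetilde q = c\, v\, \widetilde{q^*}$ yields $L_q = vL_{q^*}$, so that $L_q/q = L_{q^*}/q^*$. Finally I would apply Proposition~\ref{prop:2} to the critical point $q^*\in\mpoly_{n-k}$, with $d_1$ the monic g.c.d.\ of $L_{q^*}$ and $q^*$ and $q_1 := q^*/d_1$; this supplies interpolation by $L_{q_1}/q_1 = L_{q^*}/q^*$ at the zeros of $\check{q_1}^2\check{d_1}/z$, and combined with the interpolation at the zeros of $\check d$ just established, it produces the full divisor $\check{q_1}^2\check{d_1}\check d/z$ asserted in the proposition.

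The bulk of the argument is routine bookkeeping; the only conceptually new input beyond the single-boundary-root case is the identity $u_{f,q_j^\sharp} = c_j^{-1}u_{f,q^*}$, which is what propagates the vanishing recorded on the auxiliary Cauchy transforms $w_j$ down to the error of the projection $L_{q^*}/q^*$ itself. The mildly delicate point I expect to be the main obstacle is arranging the product neighborhoods small enough that all the decoupling identities $\Theta=\map_{n-k}$ and $\Xi_j=\map_{w_j,k_j}$ hold simultaneously, but pairwise coprimality of the boundary factors together with the openness of $\mpoly^{1/\rho}$ makes this a simple continuity argument rather than a genuine difficulty.
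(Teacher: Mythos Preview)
Your proposal is correct and follows essentially the same route as the paper. The paper's own argument for the general case is the terse sentence ``handled the same way upon writing $q=q^*v_1\cdots v_\ell$ and introducing a product neighborhood $\mathcal{Q}\times\mathcal{V}_1\times\cdots\times\mathcal{V}_\ell$ to proceed with the above analysis on each of the corresponding maps $\Theta,\Xi_1,\ldots,\Xi_\ell$''; your write-up is a faithful unpacking of this, including the explicit identity $u_{f,q_j^\sharp}=c_j^{-1}u_{f,q^*}$ (the multi-factor analogue of the paper's $u_{v\chi}=e^{-i\theta}u_\chi$) that transfers the vanishing from $w_j$ to $u_{f,q^*}$, and the final combination with Proposition~\ref{prop:2} to assemble the full interpolation divisor.
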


Again the converse of Proposition \ref{prop:4} is true, namely the properties of $q^*$ and $v$ asserted there imply that $q=q^* v$ is critical for $\map_n$. This is easy to check by reversing the previous arguments, but we shall not use it.

\section{A Criterion for Local Minima}
\label{sec:criterion}

Let $f\in\bar H_0$ and $\map_n=\map_{f,n}$ be the extended map obtained in Proposition \ref{prop:3}, based on (\ref{eq:anlook}) and (\ref{eq:uq}). The latter is a smooth real-valued function, defined on an open neighborhood of $\overline\mpoly_n$ identified with a subset of $\C^n\sim\R^{2n}$ by taking as coordinates all coefficients but the leading one. By definition, a critical point of $\map_n$ is a member of $\overline\mpoly_n$ at which the gradient $\nabla\map_n$ vanishes. This notion is of course independent of which coordinates are used, and so is the signature of the second derivative, the so called \emph{Hessian} quadratic form\footnote{This is not true at non-critical points.}. A critical point $q$ is called nondegenerate if the Hessian form is nonsingular at $q$,  and then the number of negative eigenvalues of this form is called the \emph{Morse index} of $q$ denoted by $M(q)$. Observe that nondegenerate critical points are necessarily isolated.

From first principles of differential topology \cite{GuilleminPollack} it is known that $(-1)^{M(q)}$, which is called the \emph{index} of the nondegenerate critical point $q$,  is equal to the so-called Brouwer degree of the vector field $\nabla\map_n/\|\nabla\map_n\|_\textnormal{e}$ on any sufficiently small sphere centered at $q$, where $\|\cdot\|_\textnormal{e}$ is the Euclidean norm in $\R^{2n}$.

One can show that $\partial\overline\mpoly_n$ is a compact manifold\footnote{We skim through technical difficulties here, because this manifold is not smooth; the interested reader should consult the references we give.}, so if $\map_n$ has no critical points on $\partial\overline\mpoly_n$ and only nondegenerate critical points in $\mpoly_n$, then the sum of the indices of the critical points is equal to the Brouwer degree of $\nabla\map_n/\|\nabla\map_n\|_\textnormal{e}$ on $\partial\overline\mpoly_n$. The surprising fact is that the latter is \emph{independent of $f$} (see \cite{thB}, \cite[Sec. 5]{BO88}, and \cite[Thm. 2]{B_CMFT99}) and is actually equal to 1. Altogether, the following analogue of the Poincar\'e-Hopf theorem holds in the present setting.
\begin{thmI}
Let $f\in\bar H_0$ and $\mathscr{C}_{f,n}$ be the set of the critical points of $\map_{f,n}$ in $\overline\mpoly_n$. Assume that all members of $\mathscr{C}_{f,n}$ are nondegenerate, and that $\mathscr{C}_{f,n}\cap\partial\overline\mpoly_n=\emptyset$. Then
\[
\sum_{q\in\mathscr{C}_{f,n}}(-1)^{M(q)} = 1.
\]
\end{thmI}

To us, the value of the index theorem is that if can show every critical point  is a nondegenerate local minimum and none of them lies on $\partial\overline{\mpoly_n}$, then the critical point is unique. To see this, observe that local minima have Morse index 0 and therefore index 1.

To make this criterion effective, we need now to analyze the Morse index of a critical point, starting with the computation of the Hessian quadratic form.

Let $q$ be a critical point of $\map_n$. It is easy to check that the Hessian quadratic form of $\map_n$ at $q$ is given by
\begin{equation}
\label{defHessian}
\mathscr{Q}(v) = 2\re\left(\sum_{j=0}^{n-1}\sum_{k=0}^{n-1}\left(v_jv_k\frac{\partial^2\map_n}{\partial q_k\partial q_j}(q) + v_j\bar v_k\frac{\partial^2\map_n}{\partial\bar q_k\partial q_j}(q)\right)\right),
\end{equation}
where we have set $v(z)=\sum_{j=0}^{n-1}v_jz^j$ for a generic element of $\poly_{n-1}$, the latter being naturally identified with the tangent space to $\mpoly_n$ at $q$, and we continue to consider $q_j,\bar q_j$, $j\in\{0,\ldots,n-1\}$, as coordinates on $\mpoly_n$. Clearly, $q$ is a nondegenerate local minimum if and only if $\mathscr{Q}$ is positive definite, {\it i.e.},
\begin{equation}
\label{eq:pdqf}
\mathcal{Q}(v) >0  \quad \mbox{ for}\quad v\in\poly_{n-1},\ v\neq0.
\end{equation}
Let us assume that $q$ is irreducible, hence $q\in\mpoly_n$ by Proposition \ref{prop:4}. To derive conditions that ensure the validity of (\ref{eq:pdqf}), we commence by reworking the expression for $\mathscr{Q}$.

Any polynomial in $\poly_{2n-1}$ can be written $p_1L_q+p_2q$ for suitable $p_1,p_2\in\mpoly_{n-1}$, due to the coprimeness of $L_q$ and $q$. Therefore
\begin{equation}
\label{eq:ortherror}
\left\langle\frac{p}{q^2},f-\frac{L_q}{q}\right\rangle = 0 \quad \mbox{for any} \quad p\in\poly_{2n-1}
\end{equation}
by (\ref{eq:orth}) and (\ref{eq:partials}). In view of (\ref{eq:ortherror}),  differentiating (\ref{eq:orth}) with respect to $q_j$ and evaluating at $q$ leads us to 
\begin{equation}
\label{eq:moreorth}
\left\langle \frac{\partial}{\partial q_j}\left(\frac{L_q}{q}\right),\frac{p}{q}\right\rangle = 0, \quad j\in\{0,\ldots,n-1\}, \quad p\in\poly_{n-1},
\end{equation}
which means that $\partial(L_q/q)/\partial q_j$ belongs to $V_q^\perp$. Hence, we get from (\ref{eq:vqperp}) that
\begin{equation}
\label{eq:nuj}
\frac{\partial}{\partial q_j}\left(\frac{L_q}{q}\right) = \frac{q\partial L_q/\partial q_j-z^jL_q}{q^2} =: \frac{\widetilde q\nu_j}{q^2}, \quad \nu_j\in\poly_{n-1}, \quad j\in\{0,\ldots,n-1\}.
\end{equation}
As $L_q$ and $q$ are coprime, the polynomials $\nu_j$ are linearly independent by construction,  thus we  establish a one-to-one linear correspondence on $\poly_{n-1}$ by setting
\begin{equation}
\label{correspnu}
v(z) = \sum_{j=0}^{n-1}v_jz^j \quad \leftrightarrow \quad \nu(z) = - \sum_{j=0}^{n-1}v_j\nu_j(z).
\end{equation}
Moreover, from Proposition \ref{prop:2} where $d=1$ and $q^*=q$, we can write ({\it compare} (\ref{interpw}))
\begin{equation}
\label{eq:errzout}
f - \frac{L_q}{q} = \frac{\widetilde q\check q}{q}w_q^\sigma \quad \mbox{for some} \quad w_q\in H^2.
\end{equation}
Note that $w_q^\sigma\in \bar H_0$ since $f$ does, hence $w_q$ is holomorphic across $\T$.
\noindent
Now, it follows from (\ref{eq:partials}) and (\ref{dconj}) that
\begin{eqnarray}
\frac{\partial^2 \map_n}{\partial\bar q_k\partial q_j}(q) &=& -\left\langle\frac{\partial^2}{\partial\bar q_k\partial q_j}\left(\frac{L_q}{q}\right),f-\frac{L_q}{q}\right\rangle + \left\langle\frac{\partial}{\partial q_j}\left(\frac{L_q}{q}\right),\frac{\partial}{\partial q_k}\left(\frac{L_q}{q}\right)\right\rangle \nonumber \\
\label{eq:secpar1}
&=& \left\langle\frac{\widetilde q\nu_j}{q^2},\frac{\widetilde q\nu_k}{q^2} \right\rangle = \left\langle\frac{\nu_j}{q},\frac{\nu_k}{q} \right\rangle
\end{eqnarray}
by (\ref{eq:ortherror}), (\ref{eq:nuj}), and the fact that $\widetilde q/q$ is unimodular on $\T$. Furthermore
\begin{eqnarray}
\frac{\partial^2 \map_n}{\partial q_k\partial q_j}(q) &=& -\left\langle\frac{\partial^2}{\partial q_k\partial q_j}\left(\frac{L_q}{q}\right),f-\frac{L_q}{q}\right\rangle + \left\langle\frac{\partial}{\partial q_j}\left(\frac{L_q}{q}\right),\frac{\partial L_q/\partial\bar q_j}{q}\right\rangle \nonumber \\
&=& -\left\langle\frac{\partial^2}{\partial q_k\partial q_j}\left(\frac{L_q}{q}\right),\frac{\widetilde q\check q}{q}w_q \right\rangle \nonumber
\end{eqnarray}
by (\ref{eq:moreorth}) and (\ref{eq:errzout}). Now, a simple computation using (\ref{eq:nuj}) yields
\[
\frac{\partial^2}{\partial q_k\partial q_j}\left(\frac{L_q}{q}\right) = \frac{q(\partial^2L_q/\partial q_k\partial q_j)-z^k(\partial L_q/\partial q_j)+z^j(\partial L_q/\partial q_k)}{q^2} - 2z^j\frac{\widetilde q\nu_k}{q^3},
\]
and since the first fraction on the above right-hand side belongs to $\poly_{2n-1}/q$, we deduce from (\ref{eq:moreorth}) and what precedes that
\begin{equation}
\label{eq:secpar2}
\frac{\partial^2 \map_n}{\partial q_k\partial q_j}(q) = 2\left\langle\frac{z^j\widetilde q\nu_k}{q^3},\frac{\widetilde q\check q}{q}w_q^\sigma\right\rangle = 2\left\langle\frac{z^j\nu_k}{q},w_q^\sigma\right\rangle,
\end{equation}
since $\widetilde q/q$ is unimodular while $\overline{\check q} = q$ on $\T$. So, we get from (\ref{eq:secpar1}), (\ref{eq:secpar2}), and (\ref{correspnu}) that
\[
\sum_{j=0}^{n-1}\sum_{k=0}^{n-1}v_jv_k \frac{\partial^2\map_n}{\partial q_k\partial q_j}(q) = -2\left\langle\frac{v\nu}{q},w_q^\sigma\right\rangle = -2\left\langle\frac{\nu}{q},(vw_q)^\sigma\right\rangle
\]
and
\[
\sum_{j=0}^{n-1}\sum_{k=0}^{n-1}v_j\bar v_k \frac{\partial^2\map_n}{\partial\bar q_k\partial q_j}(q) = \left\langle\frac{\nu}{q},\frac{\nu}{q}\right\rangle = \left\|\frac{\nu}{q}\right\|^2_2.
\]
Therefore, in view of (\ref{defHessian}) the quadratic form $\mathscr{Q}/2$ can be rewritten as
\begin{equation}
\label{reecrQ}
\frac12\mathscr{Q}(v) = \left\|\frac{\nu}{q}\right\|^2_2 - 2\re\left\langle\frac{\nu}{q},(vw_q)^\sigma\right\rangle = \left\|\frac{\nu}{q}\right\|^2_2 - 2\re\left\langle\left(\frac{\nu}{q}\right)^\sigma,vw_q\right\rangle.
\end{equation}

To manage the above expression,  we assume that $L_q/q$ does not interpolate $f$ on $\T$, i.e. that $w_q$ has no zeros there, and we let $Q\in\mpoly_l$ have the same zeros as $w_q$ in $\D$, counting multiplicities. Thus we can write $w_q=o_qQ/\widetilde{Q}$, where $o_q$ is holomorphic and zero-free on a neighborhood of $\overline{\D}$, while $|o_q|=|w_q|$ on $\T$ since $Q/\widetilde{Q}$ is unimodular there\footnote{The function $o_q$ is none but the \emph{outer factor} of $w_q$ in $H^2$, see \cite[Thm. 2.8]{Duren}.}. Consider now the Hankel operator $\Gamma$, with  symbol $s_q:=L_q/(o_qq\widetilde q)$, i.e.
\[
\begin{array}{rll}
\Gamma: H^2 &\to& \bar H_0^2 \\
u &\mapsto& \displaystyle \pj_-\left(s_qu\right),
\end{array}
\]
where $\pj_-$ is the orthogonal projection from $L_2$ onto $\bar H_0^2$. Observe that $\Gamma$ is well defined because $s_q$ is bounded on $\T$, and since the latter is meromorphic in $\D$ with poles at the zeros of $q$, counting multiplicities. It is elementary \cite{Peller} that $\Gamma(H^2)=V_q$, that ${\rm Ker}\,\Gamma=(q/\widetilde{q})H^2$, and that $\Gamma: V_{\widetilde q} \to V_q$ is an isomorphism, where $V_{\widetilde q}:=\poly_{n-1}/\widetilde q$ is readily seen to be the orthogonal complement of ${\rm Ker}\,\Gamma$ in $H^2$. Thus, there exists an operator $\Gamma^\#:V_q\to V_{\widetilde q}$, which is inverse to $\Gamma_{|V_{\widetilde q}}$. To evaluate $\Gamma^\#$, observe from (\ref{eq:nuj}) that
\begin{eqnarray}
\Gamma(vo_q) &=& \pj_-\left(\frac{vL_q}{q\widetilde q}\right) = \sum_{j=0}^{n-1}v_j\pj_-\left(\frac{z^jL_q}{q\widetilde q}\right) = \sum_{j=0}^{n-1}v_j\pj_-\left(\frac{q(\partial L_q/\partial q_j)-\widetilde q\nu_j}{q\widetilde q}\right) \nonumber \\
&=& \sum_{j=0}^{n-1}v_j\pj_-\left(\frac{-\widetilde q\nu_j}{q\widetilde q}\right) = \pj_-\left(\frac{\nu}{q}\right) = \frac{\nu}{q}, \nonumber
\end{eqnarray}
where we used that $(\partial L_q/\partial q_j)/\widetilde q\in H^2$ and that $\nu/q\in\bar H_0^2$. Hence we may write
\begin{equation}
\label{relev}
\Gamma^\#\left(\frac{\nu}{q}\right) = vo_q + u, \quad \mbox{with}\quad u\in 
\frac{q}{\widetilde q}\,H^2={\rm Ker}\,\Gamma,
\end{equation}
and since $w_q/o_q\in H^\infty$ it follows that $uw_q/o_q\in {\rm Ker}\,\Gamma$ as well, entailing by (\ref{reecrQ}) and (\ref{relev}) that
\[
\frac12\mathscr{Q}(v) = \left\|\frac{\nu}{q}\right\|^2_2 - 2\re\left\langle\left(\frac{\nu}{q}\right)^\sigma,\frac{w_q}{o_q}\,\Gamma^\#\left(\frac{\nu}{q}\right)\right\rangle
\]
because $(\nu/q)^\sigma\in V_{\widetilde q}=({\rm Ker}\,\Gamma)^\perp$.  Altogether, we see that
\[
\frac12\mathscr{Q}(v)  \geq  \left\|\frac{\nu}{q}\right\|^2_2 - 2\left|\left\langle\left(\frac{\nu}{q}\right)^\sigma,\frac{w_q}{o_q}\,\Gamma^\#\left(\frac{\nu}{q}\right)\right\rangle\right| \geq \left(1-2\|\Gamma^\#\|\right)\left\|\frac{\nu}{q}\right\|_2^2 \nonumber
\]
by the Schwarz inequality and since $|w_q/o_q|=1$ on $\T$ while the $\sigma$ operation preserves the norm. The inequalities above imply that $\mathscr{Q}$ is positive definite as soon as $\|\Gamma^\#\|<1/2$. This last inequality is equivalent to saying that 2 is strictly less than the smallest singular value of $\Gamma_{|_{V_q}}$, which is also  the $n$-th singular value of $\Gamma$ since $V_{\widetilde q}$ has dimension $n$ and is the orthogonal complement of $\mbox{Ker}\,\Gamma$ in $H^2$. By the Adamjan-Arov-Krein theorem \cite{Peller}, the singular value in question is equal to the error in $L^\infty$-best approximation to $s_q$ from $H^\infty_{n-1}$, where $H^\infty_{n-1}$ stands for the set of functions of the form $h/\chi$ where $h\in H^\infty$ and $\chi\in\mpoly_{n-1}$.  Let us indicate this approximation number by $\sigma_{n-1}$:
\[
\sigma_{n-1} := \inf_{g\in H^\infty_{n-1}}\left\|s_q- g\right\|_{L^\infty}.
\]
As $s_q$ is holomorphic on a neighborhood of $\overline{\D}$, it follows from the Adamjan-Arov-Krein theory that the infimum is uniquely attained at some $g_{n-1}\in H^\infty_{n-1}$ which is holomorphic on a neighborhood of $\T$, that $|s_q-g_{n-1}|(\xi)=\sigma_{n-1}$ for all $\xi\in\T$, and that $\wn\left(s_q - g_{n-1}\right) \leq -2n+1$ as soon as $\sigma_{n-1}>0$, where $\wn$ stands for the usual winding number of a non-vanishing continuous function on $\T$. 

We will appeal to a de la Vall\'ee-Poussin principle for this type of approximation, to the effect that
\begin{equation}
\label{dVP}
\sigma_{n-1} \geq \inf_\T\left|s_q- g\right|,
\end{equation}
whenever $g\in H_{n-1}^\infty$ is such that
\[
\wn\left(s_q - g\right) \leq 1-2n.
\]
This principle is easily deduced from the Rouch\'e theorem, for if  (\ref{dVP}) did not hold then the inequality
\[
|(g_{n-1}-g)-(s_q-g)|=|g_{n-1}-s_q|=\sigma_{n-1}<|s_q-g|
\]
would imply that $\wn(g_{n-1}-g)=\wn(s_q-g)\leq 1-2n$, which is impossible unless $g_{n-1}=g$  because $g_{n-1}-g$ is meromorphic with at most $2n-2$ poles in $\D$.

Hence, with our assumptions, that $q$ is an irreducible critical point and that $f-L_q/q$ has no zero on $\T$, we find that $\mathscr{Q}$ will be positive definite if there exists $\Pi_q\in\rat_{n-1}$ such that
\begin{equation}
\label{critcomp}
2|f-L_q/q| < |\Pi_q - L_q/q| \quad \mbox{on} \quad \T \quad \mbox{and} \quad \wn(f-\Pi_q)\leq 1-2n.
\end{equation}
Indeed, in this case, we will get
\begin{equation}
\label{crit2}
2 < \left|\frac{L_q/q-\Pi_q}{\widetilde q\check q w_q^\sigma/q}\right| = \left|\frac{L_q}{o_qq\widetilde q}-\frac{\Pi_q}{\widetilde qo_q}\right|=
\left|s_q-\frac{\Pi_q}{\widetilde qo_q}\right| \quad \mbox{on} \quad \T,
\end{equation}
because $|\check q/q|\equiv1$ and $|w_q^\sigma|\equiv|w_q|\equiv|o_q|$ on $\T$. Moreover, it follows from (\ref{critcomp}) and the triangle inequality that $|f-L_q/q|<|\Pi_q-f|$, and therefore
\[
\left|(L_q/q-\Pi_q)-(f-\Pi_q) \right|<\left|f-\Pi_q \right|
\]
so that $\wn(L_q/q-\Pi_q)=\wn(f-\Pi_q)$ by Rouch\'e's theorem. Consequently
\[
\wn\left(s_q-\frac{\Pi_q}{\widetilde qo_q}\right)=\wn\left(\frac{L_q/q-\Pi_q}{o_q\widetilde q}\right) = \wn\left(L_q/q-\Pi_q\right) = \wn(f-\Pi_q),
\]
where we used  that $o_q\widetilde q$ does not vanish on $\overline{\D}$. So we see that $\Pi_q/\widetilde{q}o_q$ can be used as $g$ in (\ref{dVP}) to bound $\sigma_{n-1}$ from below by a quantity which, by (\ref{crit2}), is strictly bigger than 2. Altogether, rewriting (\ref{critcomp}) in the equivalent form (\ref{eq:p1}) below, we proved:

\begin{thm}[Comparison Criterion]
\label{thm:U}
Let $f\in\bar H_0$ and $q\in\mpoly_n$ be an irreducible critical point of $\map_{f,n}$ such that $f-L_q/q$ does not vanish on $\T$. Then $q$ is a nondegenerate local minimum as soon as there exists $\Pi_q\in\rat_{n-1}$ satisfying
\begin{equation}
\label{eq:p1}
2 < \left|1 - \frac{f-\Pi_q}{f-L_q/q}\right| \quad \mbox{on} \quad \T \quad \mbox{and} \quad \wn(f-\Pi_q)=1-2n.
\end{equation}
\end{thm}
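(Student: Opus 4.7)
The plan is to show the Hessian form $\mathscr{Q}$ at $q$ is positive definite by reducing positivity to a norm bound on an explicit operator, and then to exhibit a concrete competitor produced from $\Pi_q$ that certifies this bound via Adamjan--Arov--Krein duality and a Rouch\'e-type argument.

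First, I would bring $\mathscr{Q}$ into a workable form. Using the identifications $\partial (L_q/q)/\partial q_j = \widetilde q \nu_j/q^2$ with $\nu_j\in\poly_{n-1}$, the correspondence $v\leftrightarrow\nu=-\sum v_j\nu_j$ is a linear isomorphism of $\poly_{n-1}$ by irreducibility of $q$. Combining the orthogonality relation \eqref{eq:ortherror} with the factorization \eqref{eq:errzout} $f-L_q/q = (\widetilde q\check q/q)\,w_q^\sigma$ (valid because $f-L_q/q$ does not vanish on $\T$, so $w_q$ has no zeros on $\T$ either), differentiating \eqref{eq:orth} twice and rearranging gives the compact expression
\[
\tfrac12\mathscr{Q}(v) = \left\|\frac{\nu}{q}\right\|_2^2 - 2\re\left\langle \left(\frac{\nu}{q}\right)^\sigma, v\,w_q\right\rangle.
\]

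Next, I would interpret the cross-term via a Hankel operator. Let $o_q$ be the outer factor of $w_q$, so that $w_q = o_q\, Q/\widetilde Q$ with $Q$ monic carrying the zeros of $w_q$ in $\D$, and set $s_q := L_q/(o_q q\widetilde q)$, which is bounded on $\T$. The Hankel operator $\Gamma u = \pj_-(s_q u)$ satisfies $\Gamma(H^2)=V_q$, $\ker\Gamma = (q/\widetilde q)H^2$, and $\Gamma|_{V_{\widetilde q}}: V_{\widetilde q}\to V_q$ is an isomorphism; let $\Gamma^\#$ denote its inverse. A direct computation using \eqref{eq:nuj} shows $\Gamma(v o_q) = \nu/q$, hence $\Gamma^\#(\nu/q) = v o_q + u$ with $u\in\ker\Gamma$. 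Since $w_q/o_q\in H^\infty$ and $(\nu/q)^\sigma\in V_{\widetilde q}=(\ker\Gamma)^\perp$, the Cauchy--Schwarz inequality together with $|w_q/o_q|\equiv 1$ and the isometric nature of $\sigma$ yields
\[
\tfrac12\mathscr{Q}(v) \ge \bigl(1-2\|\Gamma^\#\|\bigr)\,\|\nu/q\|_2^2,
\]
so it suffices to prove $\|\Gamma^\#\|<1/2$, i.e.\ that the smallest singular value of $\Gamma|_{V_{\widetilde q}}$, which is the $n$-th singular value $\sigma_{n-1}(\Gamma)$, exceeds $2$.

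Now I would invoke Adamjan--Arov--Krein: $\sigma_{n-1}(\Gamma)$ equals the distance in $L^\infty(\T)$ from $s_q$ to $H^\infty_{n-1}$. Using the de la Vall\'ee-Poussin type principle (a Rouch\'e argument on $g_{n-1}-g$) one has $\sigma_{n-1}(\Gamma)\ge \inf_\T|s_q-g|$ for any $g\in H^\infty_{n-1}$ with $\wn(s_q-g)\le 1-2n$. Given the hypothetical $\Pi_q\in\rat_{n-1}$ satisfying \eqref{eq:p1}, I would take $g:=\Pi_q/(\widetilde q\,o_q)$, which lies in $H^\infty_{n-1}$ since $\widetilde q o_q$ does not vanish on $\overline\D$. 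A short computation using $|\check q/q|\equiv 1$ and $|w_q^\sigma|\equiv|o_q|$ on $\T$ turns the first inequality of \eqref{eq:p1} into $|s_q-g|>2$ on $\T$, while Rouch\'e applied to $f-\Pi_q$ and $L_q/q-\Pi_q$ (using the same inequality) gives $\wn(s_q-g)=\wn(f-\Pi_q)=1-2n$. The de la Vall\'ee-Poussin bound then yields $\sigma_{n-1}(\Gamma)>2$, so $\|\Gamma^\#\|<1/2$ and $\mathscr{Q}$ is positive definite.

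The main obstacle, and the structural heart of the argument, is the identification of the smallest singular value of $\Gamma|_{V_{\widetilde q}}$ with the AAK approximation number and its lower estimation by the de la Vall\'ee-Poussin principle; everything else is bookkeeping tied to the explicit form of $\nu_j$, the outer--inner factorization of $w_q$, and the unimodularity of $\widetilde q/q$ on $\T$.
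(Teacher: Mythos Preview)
Your proposal is correct and follows essentially the same approach as the paper: reduce positivity of $\mathscr{Q}$ to the bound $\|\Gamma^\#\|<1/2$ via the Hankel operator with symbol $s_q=L_q/(o_q q\widetilde q)$, then invoke Adamjan--Arov--Krein together with the de la Vall\'ee-Poussin/Rouch\'e principle applied to the competitor $g=\Pi_q/(\widetilde q o_q)$. One cosmetic remark: the factorization $f-L_q/q=(\widetilde q\check q/q)\,w_q^\sigma$ of \eqref{eq:errzout} comes from the interpolation property at the reflected zeros of $q$ and holds without any assumption on $\T$; the hypothesis that $f-L_q/q$ does not vanish on $\T$ is needed only later, to ensure $w_q$ admits an outer--inner factorization with outer factor $o_q$ bounded away from zero on $\T$.
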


In order to use this criterion, we need an appraisal of the error in interpolation to $f$ by members of $\rat_n$ and $\rat_{n-1}$. In the next section, we gather the necessary estimates for the class of functions introduced in Theorem \ref{thm:main} after the works \cite{uBY3,uY2}.

\section{Error in Rational Interpolation}
\label{sec:interpol}

Let us recall the notion of a diagonal multipoint Pad\'e approximant. Conceptually, a diagonal (multipoint)  Pad\'e approximant to a function $g$ holomorphic in a domain $\Omega\subset\overline{\C}$, is a rational function of type $(n,n)$\footnote{A rational function is said to be of type $(n_1,n_2)$ if it can be written as the ratio of a polynomial of degree at most $n_1$ by a polynomial of degree at most $n_2$.} that interpolates $g$ in a prescribed system of $2n+1$ points of $\Omega$, counting multiplicities. However, such a definition may not work and it is best to adopt a linearized one as follows. Without loss of generality, we normalize one interpolation point to be infinity and assume that $g(\infty)=0$. The remaining $2n$ interpolation points, finite or infinite, form a set $\mathcal{I}_{2n}$ accounting for multiplicities with repetition. Let $Q_{2n}$ be a  polynomial vanishing exactly at the finite points of $\mathcal{I}_{2n}$. Then: 

\begin{df}[Pad\'e Approximant] The {\it n-th diagonal (multipoint) Pad\'e approximant} to $g$ associated with $\mathcal{I}_{2n}$ is the rational function $\Pi_n=p_n/\ell_n$ satisfying:
\begin{itemize}
\item $\deg p_n\leq n$, $\deg \ell_n\leq n$, and $\ell_n\not\equiv0$;
\item $\left(\ell_n(z)g(z)-p_n(z)\right)/Q_{2n}(z)$ is analytic in $\Omega$;
\item $\left(\ell_n(z)g(z)-p_n(z)\right)/Q_{2n}(z)=O\left(1/z^{n+1}\right)$ 
as $z\to\infty$.
\end{itemize}
\end{df}

The conditions for $p_n$ and $\ell_n$ amount to solving a system of $2n+1$ homogeneous linear equations with $2n+2$ unknown coefficients, and clearly no solution can be such that $\ell_n\equiv0$. Moreover it is plain to see that all pairs $(p_n,\ell_n)$ define the same rational function $p_n/\ell_n$, thus a Pad\'e approximant indeed exists uniquely with the above definition. As a result of our normalization, note that the third condition in the above definition entails at least one interpolation condition at infinity, therefore $\Pi_n$ is in fact of type $(n-1,n)$.

For our present purpose, we shall be interested only in the case where $g=\f$ is as in Theorem \ref{thm:main}, and $\mathcal{I}_{2n}$ consists of $n$ points, each of which appears with multiplicity 2. In other words, we let $E_n$ consist of $n$ points, repeated according to their multiplicities, in the analyticity domain of $\f$, say, $\da=\overline{\C}\setminus([a,b] \cup \Lambda)$ where $[a,b]:=\supp(\mu)$ and $\Lambda$ is the set of poles of $r$. We further let $v_n$ be the monic polynomial whose roots are the finite points of $E_n$, and we obtain $\Pi_n$ from the previous definition where $g=\f$ and $Q_{2n}=v_n^2$.

Next, we let $n$ range over $\N$ and we put $\E=\{E_n\}$  for the {\it interpolation scheme}, i.e. the sequence of sets of interpolation points. By definition, the {\it support} of $\E$ is $\supp(\E):=\cap_{n\in\N}\overline{\cup_{k\geq n}E_k}$. We also introduce the \emph{probability counting measure} of $E_n$ to be the measure with mass $1/n$ at each point of $E_n$, repeating according to multiplicities. 

We shall need strong asymptotics on the behaviour of $\Pi_n$ as  $n\to\infty$. To describe them, we need some more notation. Let us denote by
\[
w(z) := \sqrt{(z-a)(z-b)}, \quad w(z)/z \to 1,
\]
the holomorphic branch of the square root outside of $[a,b]$ which is
positive on $(b,+\infty)$, and by
\begin{equation}
\label{eq:cmap}
\cmap(z) := \frac{2}{b-a}\left(z-\frac{b+a}{2}-w(z)\right)
\end{equation}
the conformal map of $D:=\overline\C\setminus[a,b]$ into $\D$ such that $\cmap(\infty)=0$ and $\cmap^\prime(\infty)>0$. Note that $\cmap$ is conjugate-symmetric.

Recall that the logarithmic energy of a positive Borel measure $\sigma$, compactly supported in $\C$, is given by $-\int\int\log|z-t|d\sigma(z)d\sigma(t)$, which is a real number or $+\infty$.

\begin{df}[Admissibility]
An interpolation scheme $\E$ is called admissible if the sums $\sum_{e\in E_n} |\cmap(e)-\cmap(\bar e)|$ are uniformly bounded with $n$, $\supp(\E)\subset\da$, and the probability counting measure of $E_n$ converges weak$^*$ to some Borel measure $\sigma$ with finite logarithmic energy\footnote{Note that $\sigma$ may not be compactly supported. In this case, pick $z_0\in\C\setminus\supp(\E)$ such that $z_0\notin\supp(\sigma)$ and set $M_{z_0}(z):=1/(z-z_0)$. Then, all the sets $M_{z_0}(E_n)$ are contained in a common compact set and their counting measures converge weak$^*$ to $\sigma^\prime$ such that $\sigma^\prime(B):=\sigma(M_{z_0}^{-1}(B))$ for any Borel set $B\subset\C$. What we require is then the finiteness of the logarithmic energy of $\sigma^\prime$.}.
\end{df}

The weak$^*$ convergence in the above definition is understood upon regarding complex measures on $\C$ as the dual space of continuous functions with compact support. 

To an admissible scheme $\E$, we associate a sequence of functions on $\da$ by putting
\begin{equation}
\label{eq:rn}
R_n(z) = R_n(\E;z) := \prod_{e\in E_n}\frac{\cmap(z)-\cmap(e)}{1-\cmap(z)\cmap(e)}, \quad z\in D.
\end{equation}
Each $R_n$ is holomorphic in $D$, has continuous boundary values 
from both sides of $[a,b]$, and vanishes only at points of $E_n$.
Note from the conjugate-symmetry of $\cmap$ that
\[\frac{\cmap(z)-\cmap(e)}{1-\cmap(z)\cmap(e)}=
\frac{\cmap(z)-\overline{\cmap(e)}}{1-\cmap(z)\cmap(e)}
\left(1+\frac{\cmap({\bar e})-\cmap(e)}{\cmap(z)-\cmap(\bar e)}\right).
\] 
Thus, $R_n$ is a Blaschke product with zero set $\overline{\cmap(E_n)}$ composed with $\cmap$, times an infinite product which is boundedly convergent on any curve separating $[a,b]$ from $\supp(\E)$ by the admissibility conditions. In particular, $\{R_n\}$ converges to zero locally uniformly in $D$.

To describe asymptotic behavior of multipoint Pad\'e approximants, we need two more concepts. Let $h$ be a Dini-continuous function on $[a,b]$. Then the {\it geometric mean} of $h$, given by
\[
\gm_h := \exp\left\{\int\log h(t)d\ed(t)\right\},
\]
is independent of the actual choice of the branch of the logarithm \cite[Sec. 3.3]{uBY3}. Moreover, the {\it Szeg\H{o} function} of $h$, defined as
\[
\szf_h(z) := \exp\left\{\frac{w(z)}{2}\int\frac{\log h(t)}{z-t}d\ed(t) - \frac12\int\log h(t)d\ed(t)\right\}, \quad z\in D,
\]
does not depend on the choice of the branch either (as long as the same branch is taken in both integrals) and is the unique non-vanishing holomorphic function in $D$ that has continuous boundary values from each side of  $[a,b]$ and satisfies $h = \gm_h\szf_h^+\szf_h^-$ and $\szf_h(\infty)=1$. The following theorem was proved in \cite[Thm. 4]{uBY3} when $r=0$ and in \cite{uY2} for the general case.

\begin{thm}
\label{thm:sa}
Let $\f$ be as in Theorem \ref{thm:main}, $\E$ an admissible interpolation scheme, and $\{\Pi_n\}$ the sequence of diagonal Pad\'e approximants to $\f$ associated with $\E$. Then
\begin{equation}
\label{eq:sa}
(\f-\Pi_n)w = [2\gm_{\dot\mu} + o(1)](\szf_{\dot\mu}R_n/R)^2
\end{equation}
locally uniformly in $\da$, where $R_n$ is as in (\ref{eq:rn}) and
\[
R(z):=\prod(\cmap(z)-\cmap(e))/(1-\cmap(z)\cmap(e)),
\]
the product defining $R$ being taken over the poles of $r$ according to their multiplicity.
\end{thm}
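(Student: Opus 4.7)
The plan is to reduce Theorem \ref{thm:sa} to strong asymptotics for the non-Hermitian orthogonal polynomials $\ell_n$ serving as denominators of $\Pi_n$, obtained by Deift--Zhou nonlinear steepest descent on an appropriate Riemann--Hilbert problem. Writing $\Pi_n=p_n/\ell_n$ with $\ell_n\in\mpoly_n$ generically of exact degree $n$, the double-point interpolation conditions at $E_n$ combined with the Pad\'e normalization at infinity translate through the standard residue calculus into the non-Hermitian orthogonality
\[
\int t^j\,\ell_n(t)\,\frac{d\mu(t)}{v_n^2(t)}=0,\qquad j=0,\ldots,n-1,
\]
and yield the exact error formula
\[
(f_\mu-\Pi_n)(z)=\frac{v_n^2(z)}{\ell_n(z)}\int\frac{\ell_n(t)}{v_n^2(t)}\,\frac{d\mu(t)}{z-t}.
\]
When $r\in\rat_m$ is present with denominator $V$, for $n$ large the polynomial $\ell_n$ is divisible by $V$, and factoring $\ell_n=V\tilde\ell_n$ reduces the problem to an analogous orthogonality of degree $n-m$ with weight modified by $V$; this is the origin of the ratio $R_n/R$ appearing in \eqref{eq:sa}.

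The strong asymptotics of $\tilde\ell_n$ are obtained by encoding it as the $(1,1)$-entry of the Fokas--Its--Kitaev matrix $Y_n$, which solves a $2\times 2$ RHP across $[a,b]$ with upper-triangular jump whose off-diagonal entry is, up to the Jacobi factor $1/\pi\sqrt{(t-a)(b-t)}$ coming from $d\ed$, the density $\dot\mu/(v_n^2 V)$. The standard Deift--Zhou transformations then apply: normalize at infinity via a $g$-function built from $\ed$, open lenses around $[a,b]$ so that the oscillatory factor $\cmap^{2n}$ reduces the jumps on the lens lips to $I$ plus exponentially small corrections away from the endpoints, construct the global parametrix using $\szf_{\dot\mu}$ together with the Blaschke-type factors assembled from $E_n$ and from the poles of $r$, and resolve the endpoint singularities with Airy local parametrices at $a,b$. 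A small-norm argument for the residual RHP yields $Y_n=(I+o(1))\cdot(\text{parametrix})$ uniformly on compact subsets of $\da$.

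Reading off the $(1,1)$- and $(1,2)$-entries of the recovered $Y_n$ produces strong asymptotics both for $\ell_n$ and for its Cauchy transform; substituting in the error representation and using the identities defining $\szf_{\dot\mu}$, $\gm_{\dot\mu}$, $\cmap$, and $R_n$, the factor $w(z)$ materializing from the $1/\pi\sqrt{(t-a)(b-t)}$ normalization in $d\ed$ and the constant $2\gm_{\dot\mu}$ combining the geometric mean of $\dot\mu$ with the double-interpolation factor, one obtains precisely the sharp expansion \eqref{eq:sa}.

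The main obstacle is that $\mu$ is complex, so the zeros of $\tilde\ell_n$ are not \emph{a priori} confined to $[a,b]$, yet the steepest-descent construction above depends crucially on exactly this: the normalized counting measure of the zeros of $\tilde\ell_n$ must converge weak$^*$ to $\ed$ in order for the $g$-function choice to be correct and for the opening of the lens not to cross any zero of $\tilde\ell_n$. This is where the bounded-variation hypothesis on $\arg\dot\mu$ is essential, through \cite{BKT05}: together with Dini-continuity and non-vanishing of $\dot\mu$, it forces the zeros of $\tilde\ell_n$ to cluster on $[a,b]$ with asymptotic distribution $\ed$ and prevents spurious zeros from wandering into $\da$ and corrupting the local parametrix matching. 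The Dini-continuity additionally guarantees that $\szf_{\dot\mu}$ has continuous boundary values, a prerequisite for the uniform endpoint matching with the Airy parametrices.
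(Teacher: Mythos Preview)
The paper does not prove Theorem~\ref{thm:sa} at all; it imports it verbatim from \cite[Thm.~4]{uBY3} for the case $r=0$ and from \cite{uY2} for the general case, as stated immediately before the theorem. Your Riemann--Hilbert/Deift--Zhou outline is indeed the route taken in those references, so as a sketch of what lies behind the citation it is serviceable.

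However, your final paragraph contains a real misconception that you should correct. The steepest-descent machinery does \emph{not} require a~priori control on the zeros of $\tilde\ell_n$: the Fokas--Its--Kitaev problem is posed on $[a,b]$ with jump determined solely by the weight, the $g$-function is fixed by the equilibrium measure of $[a,b]$ irrespective of where the zeros of $\ell_n$ sit, and the lens-opening transformation never involves those zeros (they are recovered only afterwards from the $(1,1)$-entry of the solved problem). Full degree of $\ell_n$ and the weak$^*$ convergence of its zero counting measure to $\ed$ are \emph{outputs} of the small-norm argument, not inputs. Accordingly, your appeal to \cite{BKT05} is misplaced: in this paper, \cite{BKT05} and the bounded-variation hypothesis on $\arg\dot\mu$ enter only in Theorem~\ref{thm:cp}, where they yield the bound \eqref{eq:anglem} and hence the admissibility of the scheme $\E_{\{q_n\}}$ generated by the critical points themselves; in Theorem~\ref{thm:sa} admissibility of $\E$ is simply assumed. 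A smaller imprecision: $\ell_n$ is not in general exactly divisible by the denominator of $r$; rather, $m$ of its zeros converge to the poles of $r$, which is why \eqref{eq:sa} is asserted only locally uniformly in $\da=\overline\C\setminus([a,b]\cup\Lambda)$ and why the factor $1/R^2$ survives on the right-hand side.
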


Let now $\{q_n\}$ be a sequence of irreducible critical points for $\map_{\f,n}$. Put $q_n(z)=\Pi_{1\leq j\leq n}(z-\xi_{j,n})$. It follows from Proposition \ref{prop:2} that $L_{q_n}/q_n$ interpolates $\f$ at every $1/\bar\xi_{j,n}$ with order \nolinebreak 2, hence $L_{q_n}/q_n$ is the $n$-th diagonal Pad\'e approximants associated with $\E_{\{q_n\}}:=\{\{1/\bar\xi_{j,n}\}_{j=1}^n\}$. This interpolation scheme of course depends on $q_n$, which accounts for the nonlinear character of the $L^2$-best rational approximation problem. The next theorem contains in its statement the Green equilibrium distribution of $\supp(\mu)=[a,b]$, for the definition of which we refer the reader to~\cite{SaffTotik}.

\begin{thm}
\label{thm:cp}
Let $\f$ be as in Theorem \ref{thm:main} and $\{q_n\}$ be a sequence of irreducible critical points for $\f$. Then $\E_{\{q_n\}}$ is an admissible interpolation scheme, and moreover
\begin{equation}
\label{eq:anglem}
\textstyle \sum_{j=1}^n|\im(\xi_{j,n})| \leq \const
\end{equation}
where $\const$ is independent of $n$. Also, the probability counting measures of the zeros of $q_n$ converges to the Green equilibrium distribution on $\supp(\mu)$. In addition, it holds that
\begin{equation}
\label{eq:cp}
(\f-L_{q_n}/q_n)w = [2\gm_{\dot\mu} + o(1)](\szf_{\dot\mu}R_n/R)^2
\end{equation}
locally uniformly in $\da$, where $R_n$ is as in (\ref{eq:rn}) and $R$ is as in Theorem \ref{thm:sa}.
\end{thm}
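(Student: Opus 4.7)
The plan is to apply Theorem \ref{thm:sa} to the interpolation scheme $\mathcal{E}_{\{q_n\}}$, once we have verified that this scheme is admissible and identified the limit of the normalized counting measures of its points. The starting point is Proposition \ref{prop:2}: applied with $d\equiv 1$ and $q^*=q_n$ (as $q_n$ is irreducible by assumption), it tells us that $L_{q_n}/q_n$ is precisely the $n$-th diagonal multipoint Pad\'e approximant to $\f$ associated with $\mathcal{E}_{\{q_n\}}$. The whole issue is therefore to control the location of the zeros of $q_n$.

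First I would translate the Hermite interpolation of $\f$ by $L_{q_n}/q_n$ at $\{1/\bar\xi_{j,n}\}$ with multiplicity two into a non-Hermitian orthogonality relation on $[a,b]$. A standard contour-deformation argument for Cauchy transforms yields
\begin{equation*}
\int_a^b t^k\, q_n(t)\,\frac{\dot\mu(t)}{\widetilde{q_n}^2(t)}\,d\ed(t) \;=\; e_{k,n},\qquad k=0,\ldots,n-1,
\end{equation*}
where the residual $e_{k,n}$ comes from the rational perturbation $r$ and, because $r$ has no poles on $[a,b]$, decays geometrically with $n$. Thus $q_n$ is asymptotically a non-Hermitian orthogonal polynomial of degree $n$ on $[a,b]$ with respect to the varying complex weight $\dot\mu(t)/\widetilde{q_n}^2(t)$.

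The heart of the proof then consists in applying the analysis of \cite{BKT05} to this varying-weight setting. By hypothesis, $\dot\mu$ is Dini-continuous, non-vanishing, and has $\arg\dot\mu$ of bounded variation on $[a,b]$; the remaining subtlety is that the factor $1/\widetilde{q_n}^2$ contributes an $n$-dependent variation to the argument of the weight that is of the order of $\sum_j|\im\xi_{j,n}|$. The analysis must therefore close up via a bootstrap: one first shows that no zero of $q_n$ can escape a prescribed complex neighborhood of $[a,b]$; this containment bounds the variation of $\arg(1/\widetilde{q_n}^2)$ by $\const\sum_j|\im\xi_{j,n}|$, which in turn feeds back into \cite{BKT05} to yield both the estimate (\ref{eq:anglem}) and the weak$^*$ convergence of the normalized counting measures of the zeros of $q_n$ to the Green equilibrium distribution on $\supp(\mu)$. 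This self-consistency step is the main obstacle, and is precisely where the hypothesis that $\arg\dot\mu$ is of bounded variation is used in full force.

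Once these zero-distribution properties are secured, admissibility of $\mathcal{E}_{\{q_n\}}$ is straightforward. The reflected points $1/\bar\xi_{j,n}$ lie outside $\overline{\D}$ and cluster on the image of $[a,b]$ under $z\mapsto 1/z$, which for $n$ large is contained in $\da$; the Lipschitz continuity of $\cmap$ on a neighborhood of this image, together with the fact that the $\xi_{j,n}$ remain uniformly bounded away from $0$, yields $\sum_{e\in E_n}|\cmap(e)-\cmap(\bar e)|\leq\const\sum_j|\im\xi_{j,n}|$, which is bounded thanks to (\ref{eq:anglem}); and the counting measures of $E_n$ converge weak$^*$ to the push-forward under $z\mapsto 1/z$ of the Green equilibrium distribution on $[a,b]$, a Borel probability measure with finite logarithmic energy. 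Theorem \ref{thm:sa} then delivers (\ref{eq:cp}) at once.
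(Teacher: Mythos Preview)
Your outline matches the paper's own treatment: the paper does not give a self-contained proof of Theorem~\ref{thm:cp} but assembles it from external results exactly as you do---the weak$^*$ convergence of the zero counting measures of $q_n$ to the Green equilibrium distribution is quoted from \cite[Thm.~2.1]{BY09b}, the bound (\ref{eq:anglem}) from \cite{BKT05} (see also \cite[Lem.~8]{uY2}), admissibility of $\E_{\{q_n\}}$ is then deduced from these two facts, and (\ref{eq:cp}) follows by applying Theorem~\ref{thm:sa}. Your sketch of the non-Hermitian orthogonality and the bootstrap via \cite{BKT05} is precisely the content of those cited references, so you are supplying what the paper merely points to.

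One small imprecision worth flagging: when $r\not\equiv 0$, it is \emph{not} true that all zeros of $q_n$ stay in a neighborhood of $[a,b]$; exactly $m=\deg r$ of them are attracted to the poles of $r$ (this is visible later in the proof of Theorem~\ref{thm:main}, and is consistent with (\ref{eq:cp}) through the factor $R^{-2}$). This does not damage your argument for (\ref{eq:anglem}), since those $m$ exceptional zeros lie in $\D$ and contribute at most $m$ to $\sum_j|\im\xi_{j,n}|$, but your phrase ``no zero of $q_n$ can escape a prescribed complex neighborhood of $[a,b]$'' and the ensuing claim that the reflected points cluster only on $1/[a,b]$ should be amended accordingly.
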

\noindent
A few comments on Theorem \ref{thm:cp} are in oder. First, the weak$^*$ convergence of the counting measures of the $q_n$ was obtained in \cite[Thm. 2.1]{BY09b}. It entails that the probability counting measures of the sets $E_{\{q_n\}}$ converge weak$^*$ to the reflection of the Green equilibrium measure across $\T$, which has finite energy. The admissibility of $\E_{\{q_n\}}$ follows easily from this and from the bound (\ref{eq:anglem}) which was proven in \cite{BKT05}, see \cite[Lem. 8]{uY2}. Then relation (\ref{eq:cp}) is a consequence of (\ref{eq:sa}).

\section{Proof of Theorem \ref{thm:main}}
\label{sec:proof}

To prove Theorem \ref{thm:main}, we follow the line of argument developed in \cite[Thm. 1.3]{BStW99}. The main difference is that in the present case the critical points are no longer a priori irreducible and their poles no longer belong to the convex hull of the support of the measure. As we shall see, 
these difficulties can be resolved with the help of Theorem \ref{thm:cp}.

\begin{proof}[Proof of Theorem \ref{thm:main}]
We claim there exists $N=N(\f)\in\N$ such that all the critical points of $\map_n=\map_{\f,n}$ in $\overline\mpoly_n$ are irreducible for $n>N$. Indeed, assume to the contrary that there exists an infinite subsequence of reducible critical point, say $\{q_{n_j}\}$. It follows from Propositions \ref{prop:2} and \ref{prop:4} that  each $q_{n_j}$ has a factor $q_{n_j}^*$ such that $q_{n_j}^*\in\mpoly_{n_j-k_{n_j}}$ is an irreducible critical point of $\map_{n_j-k_{n_j}}$, and the difference $\f-L_{q^*_{n_j}}/q^*_{n_j}$ vanishes at the zeros of $\widetilde{q_{n_j}^*}^2d_{n_j}$ where $d_{n_j}$ is a non-constant polynomial of degree at least $\lfloor(k_j+1)/2\rfloor\geq1$ having all its zeros in $\{|z|\geq1\}$. Suppose first that $(n_j-k_{n_j})\to\infty$ as $j\to\infty$. Then, the asymptotic behavior of $\f-L_{q^*_{n_j}}/q^*_{n_j}$ is governed by (\ref{eq:cp}), in particular it can only  vanish at the zeros of $\widetilde{q_{n_j}^*}^2$ for all large $n$ which contradicts the assumption that $d_{n_j}$ is non-constant. Second, suppose that $n_j-k_{n_j}$ remains bounded. Up to a subsequence, we may suppose that  $n_j-k_{n_j}=l$ for some integer $l$. As $\overline\mpoly_l$ is compact, we may assume that $q_{n_j}^*$ converges to some $q\in\overline\mpoly_l$. Since $L_v$ is a smooth function of $v$ in some neighborhood of $\overline\mpoly_l$ (see Subsection \ref{subsec:3}), the polynomials $L_{q^*_{n_j}}$ converge to $L_q$ hence $q_{n_j}^*\f-L_{q_{n_j}^*}$ converges to $q\f-L_q$ locally uniformly in $\da$. In particular, if we pick $0<\rho<1$ such that $\D_\rho$ contains the zeros of $q$, we get that $\f-L_{q_{n_j}^*}/q_{n_j}^*$ is a normal family of functions converging to $\f-L_q/q$ in $|z|>\rho$. But since the number of zeros it has in $\overline{\C}\setminus\D$ increases indefinitely (because it vanishes at the zeros of $d_{n_j}$ which are at least $\lfloor(n-l+1)/2\rfloor$ in number), we conclude that $\f=L_q/q$, which is impossible since $\f$ is not rational. This contradiction proves the claim.

As we just showed, each critical point $q$ of $\map_n$ in $\overline\mpoly_n$, is irreducible for all $n$ large enough, in particular it belongs to $\mpoly_n$  and moreover $L_q/q$ does not interpolate $\f$ on $\T$. Assume further that, for all such $n$, there exists a rational function $\Pi_q\in\rat_{n-1}$  such that (\ref{eq:p1}) holds with $f=\f$. Then $q$ is a local minimum by Theorem \ref{thm:U}, and therefore it is the unique critical point of order $n$ in view of the Index Theorem. Thus, to finish the proof, we need only construct some appropriate function $\Pi_q$ for each critical point $q$ of $\map_n$, provided that $n$ is large enough.

Let $\E_{\{q_n\}}$ be the interpolation scheme induced by $\{q_n\}$ and $\E_\nu$ some admissible interpolation scheme, with $\supp(\E_\nu)\subset\{|z|>1\}$. Set $\{\Pi_n\}$ to be the sequence of diagonal Pad\'e approximants to $\f$ associated with $\E_\nu$. Then Theorems \ref{thm:sa} and \ref{thm:cp} imply that when $n\to\infty$
\begin{equation}
\label{eq:p2}
\frac{(\f-\Pi_{n-1})(z)}{(\f-L_{q_n}/q_n)(z)} = [1+o(1)] \left(\frac{R_{n-1}(\E_\nu;z)}{R_n(\E_{\{q_n\}};z)}\right)^2 \quad \mbox{uniformly on} \quad \T.
\end{equation}
Moreover, for all $n$ large enough, $(\f-\Pi_{n-1})$ is holomorphic outside of $\D$ by (\ref{eq:sa}) and it has $2n-1$ zeros there, namely those of $R_{n-1}(\E_\nu;\cdot)$, counting multiplicities, plus one at infinity. Consequently $\wn(\f-\Pi_{n-1})=1-2n$ for all such $n$, and so (\ref{eq:p1}) will follow from  (\ref{eq:p2}) upon constructing $\E_\nu$  such that, for $n$ large enough,
\begin{equation}
\label{eq:p3}
\left|1- \left(\frac{R_{n-1}(\E_\nu;z)}{R_n(\E_{\{q_n\}};z)}\right)^2 \right| > 2 \quad \mbox{on} \quad \T.
\end{equation}
\noindent
For convenience, let us put $I:=[a,b]=\supp(\mu)$ and  $I^{-1}:=\{x:~1/x\in I\}$, together with $\Omega:=\overline\C\setminus(I\cup I^{-1})$. Set $\varrho:\Omega\to\{1<|z|<A\}$ to be the conformal map such that $\varrho(I)=\T$, $\varrho(I^{-1})=\T_A$,  $\lim_{z\to b^+}\varrho(z)=1$; as is well known, the number $A$ is here uniquely determined by the so-called condenser capacity of the pair $(I,I^{-1})$ \cite{SaffTotik}. Note also that, by construction, $\varrho$ is conjugate-symmetric. Define
\[
h_\nu(z) = \frac{1-(\varrho(z)A)^2}{\varrho^2(z)-A^2}, \quad z\in\Omega,
\]
which is a well-defined holomorphic function in $\Omega:=\overline\C\setminus(I\cup I^{-1})$. It is not difficult to show ({\it cf.} the proof of \cite[Thm 1.3]{BStW99} after eq. (6.24)) that $|1-h_\nu|>2$ on $\T$. Thus, to prove our theorem, it is sufficient to find $\E_\nu$ such that
\begin{equation}
\label{eq:p4}
\left(\frac{R_{n-1}(\E_\nu;z)}{R_n(\E_{\{q_n\}};z)}\right)^2 =  [1+o(1)] h_\nu(z) \quad \mbox{uniformly on} \quad \T.
\end{equation}
For this, we shall make use of the fact, also proven in the course of \cite[Thm 1.3]{BStW99}, that $h_\nu$ can be represented as
\[
h_\nu(z) := \exp\left\{\int\log\frac{1-\cmap(z)\cmap(x)}{\cmap(z)-\cmap(x)}
d\nu(x)\right\},
\] 
where $\nu$ is a signed measure of mass 2 supported on $I^{-1}$.

Denote by $\{\xi_{j,n}\}_{j=1}^n$ the zeros of $q_n$ and by $\{x_{j,n}\}_{j=1}^n$ their real parts. Observe from (\ref{eq:cp}) that any neighborhood of the poles of $r$ which is disjoint from $I$, contains exactly $m$ zeros of $q_n$ for all $n$ large enough. We enumerate these as $\xi_{n-m+1,n},\ldots,\xi_{n,n}$. The rest of the zeros of $q_n$ we order in such a manner that
\[
a < x_{1,n} < x_{2,n} < \ldots < x_{d_n,n} < b,
\]
while those $j\in\{d_n+1,\ldots,n-m\}$ for which $x_{j,n}$ either lies outside of $(a,b)$ or else coincides with $x_{k,n}$ for some $k\in\{1,\ldots,d_n\}$, are numbered arbitrarily. Again from (\ref{eq:cp}), any open neighborhood of $I$ contains $\{\xi_{j,n}\}_{j=1}^{n-m}$ for all $n$ large enough, and therefore
\begin{equation}
\label{eq:del1}
\delta_n^{im} := \max_{j\in\{1,\ldots,d_n\}}|\im(\xi_{j,n})| \to 0 \quad \mbox{as} \quad n\to\infty. 
\end{equation}
In addition, as the probability counting measures of the zeros of $q_n$ converge to a measure supported on the whole interval $I$, namely the Green equilibrium distribution, we deduce that $d_n/n\to1$ and that
\begin{equation}
\label{eq:del2}
\delta_n^{re} := \max\left\{(x_{1,n}-a),(b-x_{d_n,n}),\max_{j\in\{2,\ldots,d_n\}}(x_{j,n}-x_{j-1,n})\right\} \to 0 \quad \mbox{as} \quad n\to\infty.
\end{equation}

Define $\check\nu$ to be the image of $\nu$ under the map $t\to1/t$, so that $\check \nu$ is a signed measure on $I$ of mass 2. Let further $\imap(z):=\cmap(1/z)$ be the conformal map of $\overline{\C}\setminus I^{-1}$ onto $\D$, normalized so that $\imap(0)=0$ and $\imap^\prime(0)>0$, Finally, set
\[
K(z,t) := \log\left|\frac{\imap(z)-\imap(t)}{1-\imap(z)\imap(t)}\right|.
\]
To define an appropriate interpolation scheme $\E_\nu$, we consider the coefficients:
\begin{eqnarray}
c_{1,n}     &:=& \check\nu\left(\left[a,\frac{x_{1,n}+x_{2,n}}{2}\right)\right), \nonumber \\
c_{j,n}      &:=& \check\nu\left(\left[\frac{x_{j-1,n}+x_{j,n}}{2},\frac{x_{j,n}+x_{j+1,n}}{2}\right)\right), \quad j\in\{2,\ldots,d_n-1\}, \nonumber \\
c_{d_n,n} &:=& \check\nu\left(\left[\frac{x_{d_n-1,n}+x_{d_n,n}}{2},b\right]\right). \nonumber
\end{eqnarray}
Subsequently, we define two other sets of coefficients
\[
\left\{
\begin{array}{lcccr}
b_{j,n} &:=&  \displaystyle \sum_{k=1}^j c_{k,n} &=& \displaystyle \check\nu\left(\left[a,\frac{x_{j,n}+x_{j+1,n}}{2}\right)\right) \\ 
a_{j,n} &:=& 2 - b_{j,n} &=& \displaystyle \check\nu\left(\left[\frac{x_{j,n}+x_{j+1,n}}{2},b\right]\right)
\end{array}
\right. \quad j\in\{1,\ldots,d_n-1\},
\]
and $b_{0,n}=a_{d_n,n}:=0$. It follows in a straightforward manner from the definitions that
\[
2 - c_{j,n} = b_{j-1,n} + a_{j,n}, \quad j\in\{1,\ldots,d_n\},
\]
and therefore
\begin{equation}
\label{eq:sum1}
2 \sum_{j=1}^{d_n} K(z,\xi_{j,n}) - \sum_{j=1}^{d_n} c_{j,n} K(z,\xi_{j,n}) = \sum_{j=1}^{d_n-1} b_{j,n} K(z,\xi_{j+1,n}) + \sum_{j=1}^{d_n-1} a_{j,n} K(z,\xi_{j,n}).
\end{equation}
Next, we introduce auxiliary points $y_{j,n}$ by setting
\[
y_{j,n} := \frac{a_{j,n}\xi_{j,n}+b_{j,n}\xi_{j+1,n}}{2}, \quad j\in\{1,\ldots,d_n-1\}.
\]
Observe that
\begin{equation}
\label{eq:yxi}
|y_{j,n}-\xi_{j,n}| = \left|\frac{b_{j,n}}{2}(\xi_{j+1,n}-\xi_{j,n})\right| \leq \frac{\|\check\nu\|}{2}|\xi_{j+1,n}-\xi_{j,n}|,
\end{equation}
where $\|\check\nu\|$ is the total variation of $\check\nu$. 

Let $\mathcal{K}$ be compact in $\Omega$ and $\mathcal{U}\subset\D$ be a neighborhood of $I$ whose closure is disjoint from $\mathcal{K}$. By (\ref{eq:del1}), (\ref{eq:del2}), and (\ref{eq:yxi}) we see that both $\{\xi_{j,n}\}_{j=1}^{d_n}\subset\mathcal{U}$ and $\{y_{j,n}\}_{j=1}^{d_n-1}\subset\mathcal{U}$ for all $n$ large enough. Thus, for such $n$ and $z\in\mathcal{K}$, we can write the first-order Taylor expansions:
\begin{equation}
\label{eq:taylor1}
K(z,\xi_{j,n}) - K(z,y_{j,n}) = \frac{\partial}{\partial t}K(z,y_{j,n})(\xi_{j,n}-y_{j,n}) + O\left((\xi_{j,n}-y_{j,n})^2\right),
\end{equation}
\begin{equation}
\label{eq:taylor2}
K(z,\xi_{j+1,n}) - K(z,y_{j,n}) = \frac{\partial}{\partial t}K(z,y_{j,n})(\xi_{j+1,n}-y_{j,n}) + O\left((\xi_{j+1,n}-y_{j,n})^2\right),
\end{equation}
and adding up (\ref{eq:taylor1}) multiplied by $a_{j,n}$ to (\ref{eq:taylor2}) multiplied by $b_{j,n}$ we obtain
\begin{equation}
\label{eq:oneterm}
b_{j,n} K(z,\xi_{j+1,n}) + a_{j,n} K(z,\xi_{j,n}) - 2K(z,y_{j,n}) = O\left((\xi_{j+1,n}-\xi_{j,n})^2\right),
\end{equation}
where we took (\ref{eq:yxi}) into account and, of course, the three symbols big ``Oh'' used above indicate different functions. By the smoothness of $K$ on $\overline{\C}\setminus I^{-1}\times \overline{\C}\setminus I^{-1}$ and the compactness of $\mathcal{K}\times \overline{\mathcal{U}}$, these big ``Oh'' can be made uniform with respect to $z\in\mathcal{K}$, being majorized by 
\[
\zeta \mapsto 2\|\check\nu\|\sup_{(z,t)\in\mathcal{K}\times\overline{\mathcal{U}}} \left|\frac{\partial^2 K}{\partial t^2}(z,t)\right||\zeta|^2.
\]
In another connection, it is an immediate consequence of (\ref{eq:del1}), (\ref{eq:del2}), and (\ref{eq:anglem}) that
\begin{eqnarray}
\sum_{j=1}^{d_n-1}|\xi_{j+1,n}-\xi_{j,n}|^2 &\leq& \sum_{j=1}^{d_n-1}|x_{j+1,n}-x_{j,n}|^2 + 2\sum_{j=1}^{d_n}|\im(\xi_{j,n})|^2 \nonumber \\
\label{eq:to0}
{} &\leq& (b-a)\delta_n^{re} + \const\delta_n^{im} = o(1).
\end{eqnarray}
Therefore, we derive from (\ref{eq:to0}) upon adding equations (\ref{eq:oneterm}) for $j\in\{1,\ldots,d_n-1\}$ that
\begin{equation}
\label{eq:sum2}
\left|\sum_{j=1}^{d_n-1} b_{j,n} K(z,\xi_{j+1,n}) + \sum_{j=1}^{d_n-1} a_{j,n} K(z,\xi_{j,n}) - 2\sum_{j=1}^{d_n-1} K(z,y_{j,n})\right| = o(1),
\end{equation}
where $o(1)$ is uniform with respect to $z\in\mathcal{K}$. 
In view of (\ref{eq:sum1}), equation (\ref{eq:sum2}) can be rewritten as
\begin{equation}
\label{eq:sum3}
\left|2 \sum_{j=1}^{d_n} K(z,\xi_{j,n}) - 2\sum_{j=1}^{d_n-1} K(z,y_{j,n}) - \sum_{j=1}^{d_n} c_{j,n} K(z,\xi_{j,n})\right| = o(1).
\end{equation}
Now, it follows from (\ref{eq:del2}) and the definitions of $c_{j,n}$ and $h_\nu$ that
\begin{equation}
\label{eq:rs1}
\sum_{j=1}^{d_n} c_{j,n} K(z,x_{j,n}) \to \int K(z,t)d\check\nu(t) = -\log|h_\nu(1/z)| \quad \mbox{as} \quad n\to\infty,
\end{equation}
uniformly with respect to $z\in\mathcal{K}$. Moreover, we deduce from (\ref{eq:del1}) and (\ref{eq:anglem}) that
\begin{eqnarray}
\sum_{j=1}^{d_n} | c_{j,n} ( K(z,\xi_{j,n}) - K(z,x_{j,n}))| &\leq& C \sum_{j=1}^{d_n}|c_{j,n}| |\im(\xi_{j,n})| \nonumber \\
\label{eq:rs2}
&\leq&  C \|\check\nu\|\delta_n^{im} \to 0,
\end{eqnarray}
as $n\to\infty$, where $C=\sup_{(z,t)\in\mathcal{K}\times\overline{\mathcal{U}}}|\partial K/\partial t(z,t)|$. Hence, combining (\ref{eq:rs1}) and (\ref{eq:rs2}) with (\ref{eq:sum3}), we get
\begin{equation}
\label{eq:conv}
2\sum_{j=1}^{d_n-1} K(z,y_{j,n}) - 2\sum_{j=1}^{d_n} K(z,\xi_{j,n}) \to \log|h_\nu(1/z)| \quad \mbox{as} \quad n\to\infty,
\end{equation}
uniformly on $\mathcal{K}$. Define
\[
g_n(z) := \left(\prod_{j=1}^{d_n-1}\frac{\imap(z)-\imap(y_{j,n})}{1-\imap(z)\imap(y_{j,n})} / \prod_{j=1}^{d_n}\frac{\imap(z)-\imap(\xi_{j,n})}{1-\imap(z)\imap(\xi_{j,n})}\right)^2,
\]
which is holomorphic in $\Omega$.  By (\ref{eq:conv}), it holds that $\log|g_n(z)|\to\log|h_\nu(1/z)|$ as $n\to\infty$ uniformly on $\mathcal{K}$, and since the latter was arbitrary in $\Omega$ this convergence is in fact locally uniform there. Thus, $\{g_n\}$ is a normal family in $\Omega$, and any limit point of this family is a unimodular multiple of $h_\nu(1/\cdot)$. However, $\lim_{z\to b^+}h_\nu(z)=1$ while it follows immediately from the properties of $\imap$ that each $g_n$ has a well-defined limit at $1/b$ which is also 1. So, $\{g_n\}$ is, in fact, a locally uniformly convergent sequence in $\Omega$ and its limit is $h_\nu(1/\cdot)$.

Finally, set $\E_{\nu}:=\{E_{\nu,n}\}$, where $E_{\nu,n}=\{\zeta_{j,n}\}$, $\zeta_{j,n}:=1/y_{j,n+1}$ when $j\in\{1,\ldots,d_{n+1}-1\}$, and $\zeta_{j,n}:=1/\bar\xi_{j+1,n+1}$ when $j\in\{d_{n+1},\ldots,n\}$. Then
\[
\left(R_{n-1}(\E_\nu;z)/R_n(\E_{\{q_n\}};z)\right)^2=g_n(1/z)
\]
and (\ref{eq:p4}) follows from the limit just proved that $\{g_n\}\to h_\nu(1/\cdot)$. Thus, it only remains to prove that $\E_\nu$ is admissible. To show the first admissibility condition, put 
\[
X_n := \sum_{j=1}^{n-1}|\cmap(\zeta_{j,n-1})-\cmap(\bar\zeta_{j,n-1})|.
\]
Then, since 
\[
|\im(y_{j,n})|\leq \frac{\|\check\nu\|}{2}\bigl(|\im (\xi_{j,n})| + |\im (\xi_{j+1,n})|\bigr),\qquad  1\leq j\leq d_n-1,
\]
by the very definition of $y_{j,n}$, we get
\begin{eqnarray}
X_n &=& \sum_{j=1}^{d_n-1}|\imap(y_{j,n})-\imap(\bar y_{j,n})|  + \sum_{j=d_n}^{n-1}|\imap(\xi_{j+1,n})-\imap(\bar \xi_{j+1,n})| \nonumber \\
&\leq& 2\sup_{\overline{\mathcal{U}}}
|\imap^\prime|\left(\sum_{j=1}^{d_n-1}|\im(y_{j,n})|+\sum_{j=d_n}^{n-1}|\im(\xi_{j+1,n})|\right)\nonumber \\
&<& 2\sup_{\overline{\mathcal{U}}}
|\imap^\prime| \left(2\|\check\nu\| \sum_{j=1}^{d_n}|\im(\xi_{j,n})|+\sum_{j=d_n+1}^n|\im(\xi_{j,n})|\right) \nonumber
\end{eqnarray}
which is uniformly bounded by (\ref{eq:anglem}). Further, since each $E_{\nu,n}$ is contained in $\mathcal{U}^{-1}$, we have that $\supp(\E_\nu)\subset\{|z|>1\}$. So, it only remains to show that the probability counting measures of $E_{\nu,n}$ converges weak$^*$ to some Borel measure with finite logarithmic energy. Now, since $d_n/n\to1$ as $n\to\infty$, and by the remark made in footnote 3, it is enough to prove that this property holds for the probability counting measures of the points $\{y_{j,n}\}$. But from (\ref{eq:yxi}) and (\ref{eq:to0}), the latter have the same asymptotic  distribution as the points $\{\xi_{j,n}\}$,  namely the Green equilibrium distribution on $I$ by Theorem \ref{thm:cp}. This finishes the proof of Theorem~\ref{thm:main}. 
\end{proof}

\bibliographystyle{plain}
\small
\bibliography{unique}

\end{document}